\def\gcheck{ {\bf [GARETH?]}}
\def\gcheck{}
\newcommand\makebig[2]{%
  \@xp\newcommand\@xp*\csname#1\endcsname{\bBigg@{#2}}%
  \@xp\newcommand\@xp*\csname#1l\endcsname{\@xp\mathopen\csname#1\endcsname}%
  \@xp\newcommand\@xp*\csname#1r\endcsname{\@xp\mathclose\csname#1\endcsname}%
}
\def\email#1{{\tt #1}}
\newtheorem{theorem}{Theorem}
\newtheorem{corollary}[theorem]{Corollary}
\newtheorem{proposition}[theorem]{Proposition}
\newtheorem{remark}{Remark}
\newenvironment{proof}{\paragraph{Proof:}}{\hfill$\square$}
\def\P{{\bf P}}
\def\E{{\bf E}}
\def\eqref#1{(\ref{#1})}
\def\text#1{{\rm #1}}
\def\Var{{\rm Var}}
\def\one{\textbf{1}}
\def\ddx{{d \over dx}}
\def\half{{1 \over 2}}
\def\sign{{\rm sign}}
\def\betamax{\beta_{max}^{(d)}}
\def\betamaxsimple{\beta_{max}}
\def\betamin{\beta_{min}}
\def\wmax{w_{max}^{(d)}}
\def\wmin{w_{min}^{(d)}}
\def\D{\mathcal{D}}
\def\gbar{\overline{g}}
\def\IN{{\bf N}}
\def\IZ{{\bf Z}}
\def\IR{{\bf R}}
\def\X{{\cal X}}
\def\Binomial{{\rm Binomial}}
\def\Binnh{\Binomial(n,1/2)}
\def\n2{{n \over 2}}
\def\m2{{m \over 2}}
\def\y2{{y \over 2}}
\def\z2{{z \over 2}}
\def\newI{\hat{I}}
\def\newI{I_*}
\def\curj{{j_*}}
\def\curj{{\overline{j}}}
\begin{document}

\title{
Skew Brownian Motion and
\\
Complexity of the ALPS Algorithm
}

\author{Gareth O.\ Roberts}
\affil{Department of Statistics\\
		University of Warwick\\
		United Kingdom\\
		CV4 7AL\\
              \email{Gareth.O.Roberts@warwick.ac.uk} }

\author{Je{f}frey S.\ Rosenthal}
\affil{Department of Statistical Sciences\\
	University of Toronto\\
	100 St. George Street, Room 6018\\
	Toronto, Ontario\\
	Canada M5S 3G3\\
	  \email{jeff@math.toronto.edu} }

\author{Nicholas G. Tawn}
\affil{Department of Statistics\\
		University of Warwick\\
		United Kingdom\\
		CV4 7AL\\
              \email{n.tawn.1@warwick.ac.uk} }

\date{(September 2020; last revised April 2021)}

\maketitle

\begin{abstract}

\noindent
Simulated tempering is a popular method of allowing MCMC algorithms to
move between modes of a multimodal target density $\pi$.
The paper~\cite{TawnEtAl2020} introduced the
Annealed Leap-Point Sampler (ALPS) to allow for rapid movement between modes.
In this paper, we prove
that, under appropriate
assumptions, a suitably scaled version of the ALPS algorithm converges
weakly to skew Brownian motion.
Our results show that under appropriate assumptions,
the ALPS algorithm mixes in time $O(d [\log d]^2)$
or $O(d)$, depending on which version is used.

\end{abstract}

\section{Introduction}

Markov chain Monte Carlo (MCMC) algorithms~\cite{handbook}
are very widely used to explore and sample from a
complicated high-dimensional target probability distribution $\pi$.
The most basic version of MCMC is the {\it Metropolis
algorithm}~\cite{Metropolis1953,hastings1970monte}.  From a given state~$x$,
it proceeds by first {\it proposing} to move to a new state~$y$,
and then either {\it accepting} that proposal (i.e., moving to~$y$),
or {\it rejecting} that proposal (i.e., staying at~$x$).  The {\it
acceptance probability} is given by $\min[1, \, \pi(y) \, / \, \pi(x)]$.
If the proposal densities are
symmetric (i.e., have the same probability of proposing~$y$ from~$x$,
as of proposing~$x$ from~$y$), this procedure ensures that the resulting
Markov chain will be reversible with respect to $\pi$, and thus have
$\pi$ as its stationary density.

MCMC algorithms have a tendency to get stuck
in local modes, which limits their effectiveness.  Annealing and tempering
methods~\cite{pincus, kirkpatrick1983optimization,aarts1988simulated,
geyer1991markov,marinari1992simulated} attempt to overcome this problem
by considering different powers $\pi^\beta$ of the target density, where
$\beta \le 1$ is an {\it inverse-temperature}.  Here $\beta=1$ corresponds
to the desired distribution, so those are the only samples which are
``counted''.  However, small positive values $\beta \ll 1$
make the density flatter and thus much easier to traverse.

Despite the tremendous success of tempering, these methods suffer from
deficiencies, especially in high dimensions. In particular, tempering
of distributions does not usually preserve the relative mass contained
in each of the modes. To deal with this, the paper \cite{TawnRR2020}
introduced a {\it weight-preserving transformation}
which overcomes the weight instability problem
as long as all modes look reasonably Gaussian.  Unfortunately, in
applications that is often not the case, since modes often exhibit
significant skewness.


An alternative approach, the Annealed Leap-Point Sampler (ALPS),
was introduced in \cite{TawnEtAl2020}.  This algorithm instead
considers very {\it large} values $\beta \gg 1$, corresponding to very
peaked target densities at very cold temperatures.  (Large $\beta$
are often used in optimisation algorithms such as {\it simulated
annealing}~\cite{pincus,kirkpatrick1983optimization,aarts1988simulated},
but are not normally used by sampling algorithms.)  Assuming smoothness,
the resulting sharply
peaked modes then become approximately Gaussian, thus facilitating simpler ways
of moving between them.  Furthermore, a weight-preserving transformation
is performed to approximately preserve the probabilistic weight of each
peak upon tempering.


For any MCMC algorithm, an important question is how quickly it
converges to its stationary distribution $\pi$.  While there have
been many attempts to bound MCMC convergence times directly (see
e.g.~\cite{computsimple} and the references therein), much of the
effort has been focused on questions of {\it computational complexity},
i.e.\ how the algorithm's running time grows as a function of other
parameters (dimension, size of data, etc.).

One promising, though technically challenging, approach to
determining the computational complexity of Metropolis algorithms
is through the use of {\it diffusion limits} as the dimension $d\to\infty$.
Similar to how
symmetric random walk converges to Brownian motion under appropriate
rescaling, certain transformations of some Metropolis algorithm components
will converge to Langevin diffusions.  This was originally exploited
in~\cite{roberts1997weak,roberts1998optimal} to derive complexity
and optimality results for ordinary random-walk-based Metropolis
algorithms, and was later generalised to many other contexts
\cite{roberts2001optimal, beda:rose:2008,roberts2014minimising}.
Furthermore, the $d\to\infty$ limit of MCMC algorithms also
provides good approximate information
about processes of modest finite dimension; see e.g.\
\cite[Figure~4]{roberts2001optimal}.

In this paper, will shall apply the diffusion limits
methodology to a ``vanilla'' version of
the ALPS algorithm, to study its convergence complexity.
We will prove (Theorem~\ref{Theorem:skewBM}) that, under appropriate
assumptions, a suitably scaled version of this ALPS algorithm converges
to {\it skew Brownian motion} (cf.~\cite{Lejay2006}).
This limit will allow us to draw
conclusions about the computational complexity of our algorithm, and to
show (Corollaries~\ref{complexitycor} and~\ref{QuanTAcor})
that under appropriate assumptions, as the dimension $d\to\infty$
the vanilla ALPS algorithm mixes in time $O(d [\log d]^2)$
or $O(d)$ depending on which version is used.


These results show that ALPS converges
fairly quickly even in high dimension.
This complexity order is similar to those previously derived
for ordinary random-walk Metropolis~\cite{roberts1997weak} and for
Simulated Tempering~\cite{roberts2014minimising}, which were each
shown to converge to
dimension-free diffusions when sped up by a factor of $d$, thus showing
that their complexity is $O(d)$.  The difference is that those previous
results assumed an iid target of the
form~\eqref{eq:themixer} and~\eqref{iidass} with $J=1$, and assumed
immediate mixing between all modes at each $\beta$, so
it omitted the issue of moving between modes which often makes those
algorithms exponentially slow~\cite{woodard2009sufficient}.  By contrast,
the ALPS algorithm stores mode location information that is
used in a special mode-jumping move to converge
efficiently even when there are $J>1$ widely-separated modes,
as we now describe.
\gcheck

\section{The ALPS Algorithm}

Tempering methods for MCMC usually consider powers $\pi^\beta$ for
{\it small} values $\beta \in (0,1]$,
to make the target distribution flatter
and thus allow for easier mixing between modes.  By contrast, the
paper~\cite{TawnEtAl2020} introduced the {\it Annealed Leap-Point
Sampler} (ALPS) algorithm, which instead uses {\it large} values
$\beta \gg 1$, combined with locally
weight-preserving tempering distributions as
in~\cite{TawnRR2020} so the modes retain their relative masses.
These choices make the modes of $\pi$ even more
separated.  However, under certain smoothness and integrability
assumptions, they also make each mode appear
approximately Gaussian and hence similarly-shaped.
This allows for auxiliary ``mode-jumping'' Markov chain steps
which move effectively between the different modes when $\beta$ is large.
Then, as usual, only samples in the original temperature $\beta=1$ are
``counted'' as actual samples from $\pi$.


To illustrate the idea of this algorithm, consider the following
simple example in dimension $d=5$.
Suppose the target density $\pi$ on $\IR^5$ is a mixture
of two skew-normal modes centered at $(-20,-20,-20,-20,-20)$
and $(20,20,20,20,20)$ respectively,
with scalings~1 and~2 respectively, and with shape parameter $\alpha=10$,
so for all $\theta\in\IR^5$,
$$
\pi(\theta) \ = \
(0.7) \prod_{i=1}^5 2 \, \phi\big( \theta_i+20 \big)
					\, \Phi\big( 10(\theta_i+20) \big)
\ + \
(0.3) \prod_{i=1}^5 \phi\big( \half(\theta_i-20) \big)
					\, \Phi\big( 5(\theta_i-20) \big)
\, ,
$$
where as usual $\phi(x) = {1 \over \sqrt{2\pi}} e^{-x^2/2}$
and $\Phi(x) = \int_{-\infty}^x \phi(u) \, du$; see Figure~\ref{fig-target}.

\begin{figure}[ht]
\centerline{\includegraphics[width=14cm]{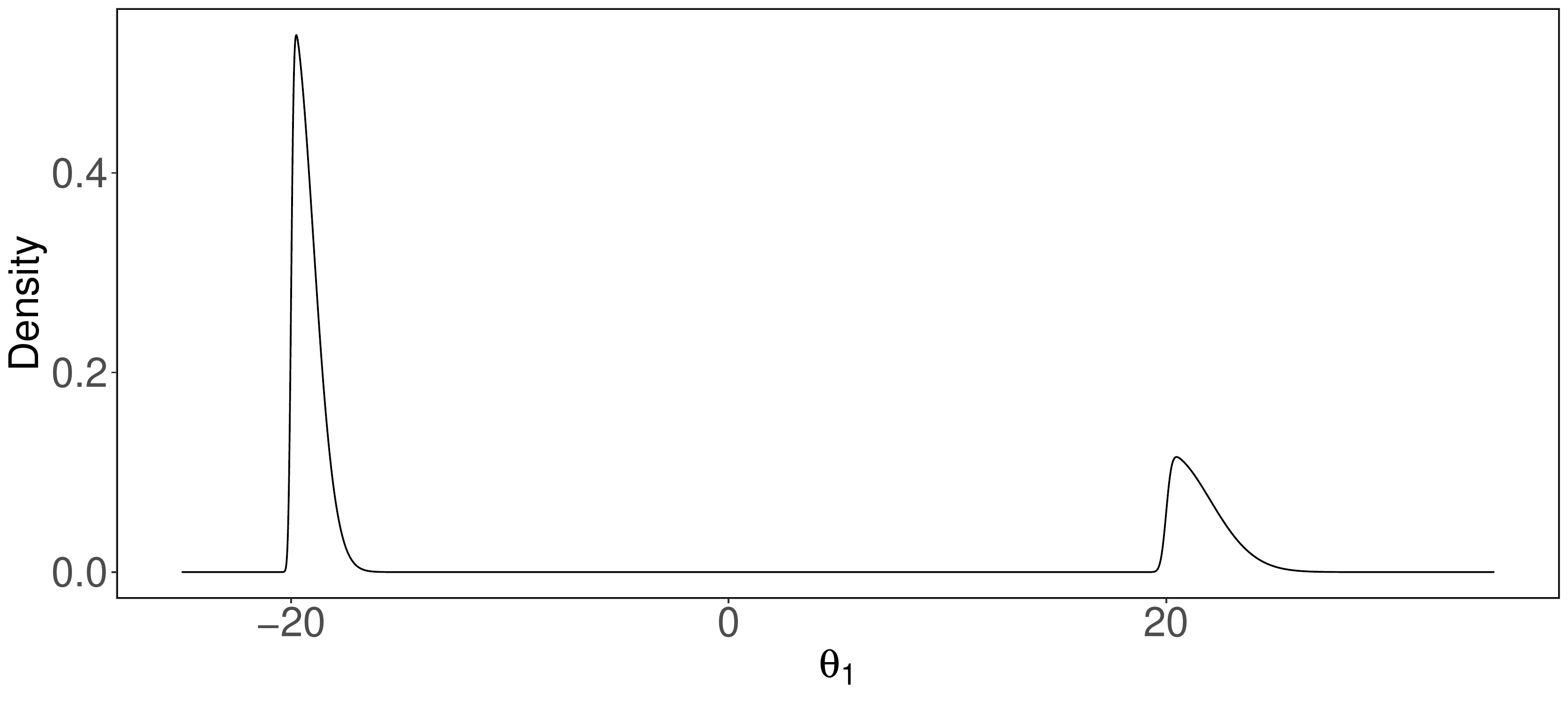}}
\caption{
The $\theta_1$ marginal of the target density in the illustrative example.}
\label{fig-target}
\end{figure}

In such an example, it is very easy for a Markov chain to mix separately
{\it within}
either of the two modes.  The challenge is to move between the modes
(which is virtually impossible for a typical fixed-temperature Metropolis
algorithm even in this simple 5-dimensional example).
The ALPS algorithm introduces a powerful independence-sampler-based
move so that at very large
inverse-temperature values $\beta \gg 1$, the chain can exploit the
near-Gaussianity of each of the modes to directly jump between them.
Figure~\ref{fig-betacolour} shows a trace plot of the
inverse-temperature values $\beta$ during one run of the algorithm, and also
indicates by colour which of the two modes the chain is in (i.e.,
closest to).  As can be seen from the plot, the chain
stays in the same mode for long periods of time, and only switches modes
when the values of $\beta$ are very large at which point it jumps to
either mode with its correct probability.
(Note that this description is for
the ``vanilla'' version of ALPS; see Remark~\ref{idealisedremark} below.)

\begin{figure}[ht]
\centerline{\includegraphics[width=16cm]{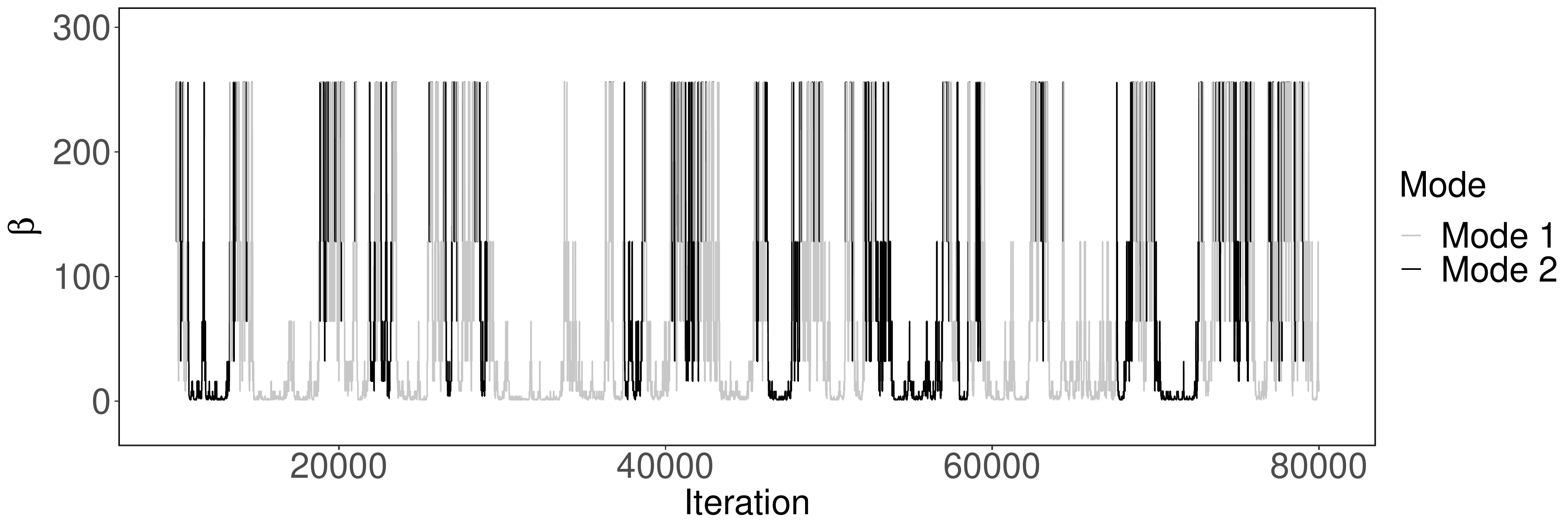}}
\caption{
Trace plot of the $\beta$ values in the illustrative example,
coloured to indicate whether the chain is in
mode~1 (light-gray) or mode~2 (black).
}
\label{fig-betacolour}
\end{figure}

Figure~\ref{fig-betacolour} illustrates that the key to the ALPS
algorithm's success is moving rapidly between the large $\beta
= \betamaxsimple = 256$ values
(which allow for mixing between the modes) and the small $\beta=1$ value
(which can be ``counted'' as a sample from $\pi$).  However, it is not
clear how quickly such mixing takes place, and in particular how it
changes depending on the target $\pi$ and dimension $d$.  To study this,
we would like to prove a diffusion limit of a suitably scaled version of
the $\beta$ process, but it is not clear from
Figure~\ref{fig-betacolour} what sort of limiting diffusive behaviour is
available.

To better understand this algorithm's convergence,
we consider a suitable transformation of $\beta$.
Namely, we instead consider the values of
$s \, \log(\betamaxsimple/\beta)$, where
$s=1$ if the chain is in mode~1 or $s=-1$ if the chain is in mode~2.
The resulting process is shown in Figure~\ref{fig-functional}, which
suggests that this modified functional does indeed start to resemble a
diffusive process.  Indeed, away from the special
value~0 (corresponding to $\betamaxsimple$ and the mode-jumping moves),
the process looks roughly like Brownian motion.
In fact, we shall prove below (Theorem~\ref{Theorem:skewBM})
that under appropriate assumptions and scalings, this modified process
converges to a skew Brownian motion.

\begin{figure}[ht]
\centerline{\includegraphics[width=16cm]{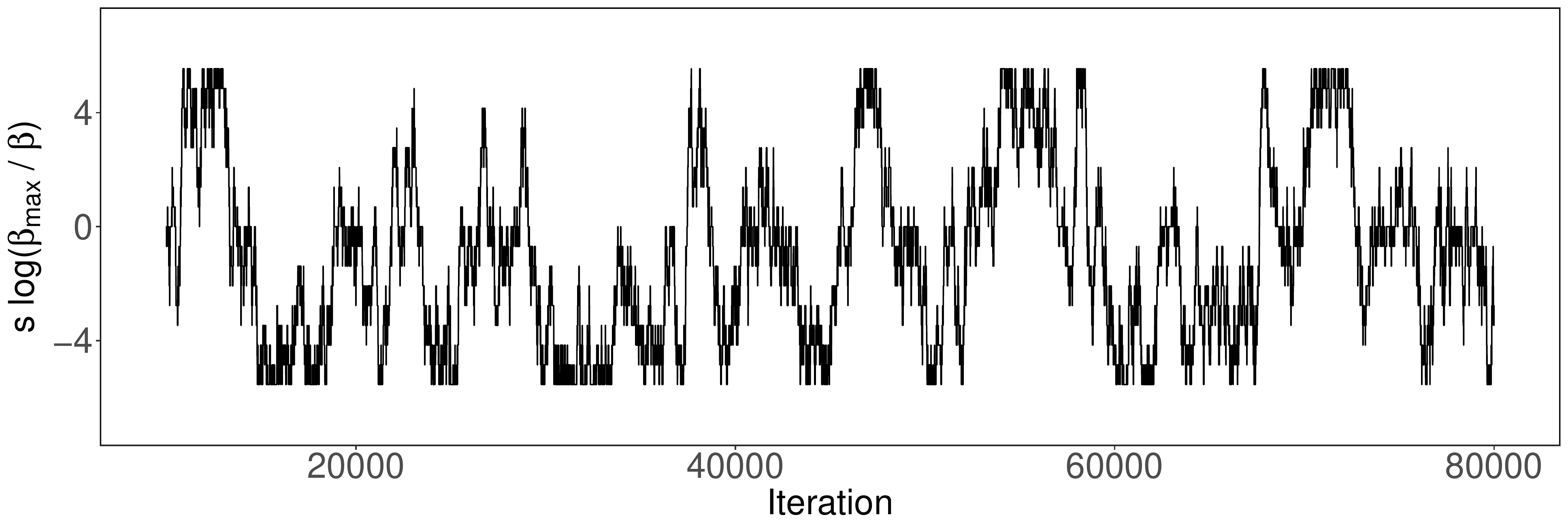}}
\caption{
A trace plot of the transformed values
$s \, \log(\betamaxsimple/\beta)$ in the illustrative example,
where $s=+1$ or $s=-1$ when the chain is in mode~1 or~2.
}
\label{fig-functional}
\end{figure}

More precisely, we shall prove diffusion limits for suitably rescaled
versions of the ALPS algorithm, as the dimension $d\to\infty$.  We shall
assume that the ALPS algorithm can easily jump between modes when it
reaches the sufficiently large inverse-temperature $\betamax$, but that
it is stuck within one mode whenever $\beta < \betamax$.  We therefore
focus on how the inverse-temperatures $\beta$ themselves are updated by
the algorithm.  In particular, we will prove (Theorem~\ref{Theorem:skewBM})
that a particular rescaling of the $\beta$ process converges to {\it
skew Brownian motion}~\cite{Lejay2006}.  This will in turn allow us to
derive computational complexity results (Section~\ref{sec-complexity}).


\begin{remark}\label{idealisedremark}
{\rm
The ``vanilla''
ALPS algorithm studied herein differs in certain ways from the full ALPS
algorithm for actual applications in~\cite{TawnEtAl2020}.
For example, we assume the process mixes perfectly between modes when
$\beta=\betamax$ (due to near-Gaussianity and
the algorithm's auxiliary mode-jumping steps),
and not at all when $\beta<\betamax$, while the full algorithm
mixes better and better at higher $\beta$ values but never
perfectly.  Also, the full algorithm actually uses {\it parallel
tempering}, in which a separate chain is run at each temperature and their
values are swapped; the single $\beta$ process studied herein can then be
thought of as following which of the chains is currently carrying
state information between larger and smaller inverse-temperatures and thus
facilitating mixing (cf.\ Section~4 of \cite{atchade2011towards}).
Finally, the full ALPS algorithm in~\cite{TawnEtAl2020} also makes use of
the QuanTA transformation~\cite{Tawn2018a}, an additional
affine transformation to increase the efficiency
of the temperature-swap moves, which we omit
here; we discuss the effect of this extra
QuanTA transformation in Corollary~\ref{QuanTAcor} below.
}
\end{remark}


\section{Assumptions}

We consider a version of the Annealed Leap-Point Sampler (ALPS)
algorithm of~\cite{TawnEtAl2020}.  We assume the chain always mixes
immediately within each mode, but the chain can only jump between modes
when at the sufficiently cold inverse-temperature $\beta = \betamax$,
at which point it immediately jumps to any of its modes with the correct
probability weight.


\ifx34
Then for each inverse-temperature $\beta \ge 1$, we set
\begin{equation}
        \pi_\beta(x) \ \propto \ \sum_{j=1}^J f_j(x,\beta)
	\ = \ \sum_{j=1}^J W_{(j,\beta)}
			\frac{[g_j(x)]^\beta}{\int [g_j(x)]^\beta dx}
\label{eq:themixerbeta}
\end{equation}
for appropriate weights $W_{(j,\beta)}$.
\fi

To facilitate theoretical analysis,
we assume that the target density
$\pi$ is a mixture of $J$ normalised densities $g_1,\ldots,g_J$
on $\IR^d$ with weights $w_1,\ldots,w_J$, i.e.\
\begin{equation}\label{eq:themixer}
\pi(x)
\ = \ \sum_{j=1}^J w_j g_j(x)
\, , \quad x\in\IR^d \, .
\end{equation}
We do not require the $g_j$ to be unimodal, but we shall nevertheless
refer to them informally as the ``modal components'' or
``modes'' of $\pi$, with the intuition that it is
easy for MCMC to mix efficiently within each individual $g_j$ but difficult
for it to jump between the different $g_j$.

We also assume that each state $x$ is ``allocated'' to (i.e.\ is ``in'')
one of the modes (e.g.\ whichever one's center it is closest to), such
that the accept/reject probabilities when updating $\beta$ can be computed
using only the mode $g_j$ of the current state, rather than
the full density $\pi$.
(This corresponds to considering the $x$ values as elements of $(\IR^d)^J$,
with a different version of the state space $\IR^d$ for each of the $J$
modes; if the modes are well separated, then especially for large $\beta$
this will be a good approximation to the actual algorithm.)
\gcheck


Then, for each inverse-temperature $\beta \ge 1$, we shall use
the tempered distribution
\begin{equation}\label{tempereddist}
\pi_\beta(x)
\ \propto \
\sum_{j=1}^J w_j \ {[g_j(x)]^\beta \over \int [g_j(x)]^\beta dx}
\ =: \
\sum_{j=1}^J w_j \, g_j^\beta(x)
\, ,
\end{equation}
where $g_j^\beta$ are the normalised powers of the $g_j$.
We assume the same weights $w_j$ can be used
for each $\beta$ due to a weight-preserving transformation
as in~\cite{TawnRR2020}.


In terms of these assumptions, the vanilla ALPS algorithm as we shall
study it is defined by Algorithm~\ref{VALPS}.
\gcheck

\begin{algorithm}
\caption{The Vanilla ALPS Algorithm}
\label{VALPS}
\begin{algorithmic} 
\REQUIRE A mixture target distribution $\pi$ on $\IR^d$
as in~\eqref{eq:themixer}.
\REQUIRE A sequence
of inverse-temperatures $1 = \beta_0 < \beta_1 < \ldots < \beta_k =:
\betamaxsimple$.
\REQUIRE An initial state $X_0\in\X$ and inverse-temperature
$\beta(0) := \beta_{I(0)}$.
\FOR{$n=0,1,2,3,\ldots$}

\STATE {\bf \# State-Changing Phase:}
\IF{$\beta(n+1)=\betamaxsimple$}
\STATE {\bf Sample} $\curj \in \{1,2,\ldots,J\}$ with probabilities $w_j$.
			\quad {\bf (auxiliary mode-jumping)}
\ELSE
\STATE {\bf Let} $\curj$ denote the mode that the current state $X_n$ is in.
\ENDIF
\STATE {\bf Sample} $X_{n+1} \sim g_\curj$.
		\quad {\bf (Mix immediately within the current mode only.)}

\STATE {\bf \# Temperature-Changing Phase:}
\STATE {\bf Select} a proposed new inverse-temperature $\beta_{\newI}
=\beta_{I(n)\pm 1}$ with probability 1/2 each.
\STATE {\bf Set} $\alpha \leftarrow \min\Big[1, \, {g_\curj^{\beta_{\newI}}(X_n)
			\over g_\curj^{\beta_{I(n)}}(X_n)}\Big]$,
with $g_\curj^\beta$ as in~\eqref{tempereddist}.
\ \ (Take $\alpha=0$ if $\newI=0$ or $\newI=k+1$).
\STATE {\bf With probability} $\alpha$,
{accept} the proposal by setting $I(n+1)=\newI$,
\STATE \quad {\bf Else},
{reject} the proposal by setting $I(n+1)=I(n)$.
\STATE {\bf Set} $\beta(n+1) \leftarrow \beta_{I(n+1)}$.

\ENDFOR
\end{algorithmic}
\end{algorithm}

In our theoretical proofs below, we assume for simplicity
(though see Remark~\ref{MoreThanTwoRemark} below)
that we have just $J=2$ modes, of weights $w_1$ and $w_2=1-w_1$ respectively.
To achieve limiting diffusions, we further assume
as in the original MCMC diffusion limit results~\cite{roberts1997weak}
that each of the individual components $g_j$
consists of iid univariate coordinates, i.e.\ that for each $j$
we have
\begin{equation}\label{iidass}
g_j(x) \ = \  \prod_{i=1}^d \gbar_j(x_i)
\end{equation}
for some fixed one-dimensional density function $\gbar_j$,
where $x=(x_1,x_2,\ldots,x_d)$.
This allows us to apply the diffusion-limit results
of~\cite{roberts2014minimising} within each individual target mode.
(Although~\eqref{iidass}
is a very restrictive assumption, it is known~\cite{roberts2001optimal}
that conclusions drawn from this special case are often approximately
applicable in much broader contexts.)


We also require assumptions on the $\beta$ values.
Write the inverse-temperatures as
\begin{equation}\label{betalist}
1 = \beta_0^{(d)} < \beta_1^{(d)} <
\ldots < \beta^{(d)}_{k(d)} =: \betamax
\end{equation}
for the process in dimension $d$.
Similar to \cite{atchade2011towards} and \cite{roberts2014minimising},
following \cite{predescu2004incomplete} and \cite{kone2005selection},
we assume that the inverse temperatures are related by
\begin{equation}
\label{eqn:beta}
\beta_i \, = \, \beta_{i-1} +
\ell(\beta_{i-1})/d^{1/2}
\end{equation}
for some fixed $C^1$ function $\ell$.
It is shown in
\cite{atchade2011towards,roberts2014minimising} that in
the single-mode iid case,
the fastest limiting diffusion is obtained by using the choice
\begin{equation}
\label{eqn:ell}
\ell(\beta) \ = \ I^{-1/2}(\beta) \, \ell_0
\end{equation}
for a fixed constant $\ell_0 \doteq 2.38$, where
$I(\beta) = \Var_{x \sim \gbar^\beta}(\log \gbar(x))$.
Inspired by this, in our later results we shall
assume the {\it Proportionality Condition} that the quantities
$I_j(\beta) := \Var_{x \sim \gbar_j^\beta}(\log \gbar_j(x))$
for the different modes are proportional, i.e.\
there are positive constants $r_j$ and a
$C^1$ function $I_0:\mathbb{R}_{+}\to\mathbb{R}_{+}$ such that
\begin{equation}\label{propcond}
I_j(\beta) \ = \ I_0(\beta)/r_j \, , \qquad j=1,\ldots,J
\, ,
\end{equation}
and shall then correspondingly assume that
\begin{equation}\label{propelleqn}
\ell(\beta) \ = \ I_0^{-1/2}(\beta) \, \ell_0
\, ,
\end{equation}
for some fixed constant $\ell_0>0$.

One example is the {\it Exponential Power Family}\/
case, in which each of the mixture component factors $g_j$
is of the form $g_j(x) \propto e^{-\lambda_j|x|^{r_j}}$ for some
$\lambda_j,r_j>0$.
It then follows from Section~2.4 of \cite{atchade2011towards} that
$I_j(\beta) = \beta^{-2}/r_j$ for $\beta>0$, so
the Proportionality Condition~\eqref{propcond} holds with
$I_0(\beta)=\beta^{-2}$.
The corresponding choice of $\ell$ from~\eqref{eqn:ell} is then
$\ell(\beta) = \beta/\sqrt{r_j}$ in mode~$j$.
This includes the Gaussian case, where each $r_j=2$
and $\lambda_j = 1/\sigma_j^2$.

\begin{remark}\label{ImmediateRemark}
{\rm
Our assumptions of immediate mixing within modes, and immediate mixing
between modes when $\beta=\betamaxsimple$, is analogous to the
corresponding assumptions in \cite[Section~5.1]{TawnRR2020} for ordinary
Simulated Tempering of immediate mixing within modes, and immediate mixing 
between modes when $\beta=\betamin$ (the hottest temperature).
In practice, even within simple modes the
mixing is not immediate, but rather takes e.g.\ $O(d)$ iterations
for random-walk Metropolis (RWM)~\cite{roberts1997weak}, or $O(d^{1/3})$
for Langevin algorithms~\cite{roberts1998optimal},
or $O(d^{1/4})$ for Hamiltonian (Hybrid) Monte Carlo~\cite{HMCtune}.
Since we shall show that the $\beta$-mixing for ALPS takes at least
$O(d)$, it follows that if Langevin or Hamiltonian dynamics are used for
the state-changing phase,
then the states will mix at a faster order than the temperatures,
thus effectively immediately,
in which case our assumption of immediate mixing within modes is reasonable.
By contrast, if RWM dynamics are used for the state-changing phase, then
the interplay between the temperature convergence
and state convergence would be more complicated, though it still works
effectively in practice~\cite{TawnEtAl2020}.
\gcheck
}
\end{remark}


\section{Main Results}
\label{sec-mainresults}

We now state various weak convergence results for various transformations
of our process.
(All proofs are deferred to Section~\ref{sec-proofs} below.)
Let $\beta^{(d)}(t)$ be the inverse temperature at time $t$
for the process described by Algorithm~\ref{VALPS} in dimension $d$.
Let $\beta^{(d)}(N(dt))$ be a continuous-time version of the
$\beta^{(d)}(t)$ process, sped up by a factor of $d$, where $\{N(t)\}$
is an independent standard rate-1 Poisson process.
To combine the two modes into one single process, we further augment
this process by multiplying it by $-1$ when the algorithm's state
is allocated to the second mode,
while leaving it positive (unchanged) when state is allocated
to the first mode, i.e.\
\begin{equation}\label{eq:xtddef}
X_t^{(d)} \ = \
\begin{cases}
 \beta^{(d)}({N(dt)}) \, , &\ \textrm{in mode~1} \\
 - \beta^{(d)}({N(dt)}) \, , &\ \textrm{in mode~2} \\
\end{cases}
\end{equation}
Our first diffusion limit result,
following \cite{roberts2014minimising}, states that within each mode,
the inverse temperature process
behaves identically to the case where there is only one mode (i.e.\ $J=1$).
To state it, we extend the definition of $I$ to
\begin{equation}\label{extendI}
I(\beta) \ = \
\begin{cases}
\text{Var}_{x\sim f_1^\beta}(\log f_1(x)) \, , &\ \beta>0 \\
\text{Var}_{x\sim f_2^{|\beta|}}(\log f_2(x)) \, , &\ \beta<0 \, . \\
\end{cases}
\end{equation}


\begin{theorem}\label{Theorem:Xdiffusion}
Assume the target distribution $\pi$ is of the form~\eqref{eq:themixer},
with $J=2$ modes of weights $w_1$ and $w_2=1-w_1$,
each having iid coordinates as in~\eqref{iidass},
with tempered distributions as in~\eqref{tempereddist} for
an inverse-temperatures list~\eqref{betalist} related by~\eqref{eqn:beta}.
Then away from its boundary points~1 and~$\betamax$,
the process $\{X_t^{(d)}\}$ from \eqref{eq:xtddef}
converges weakly as $d\to\infty$
to a fixed diffusion process $X$, which for $X^{(d)}>0$ satisfies
\begin{multline}\label{Xdifeqn}
dX_t  \ =\   \left[2  \ell^2(X_t)
  \, \Phi\left( {- \ell(X_t) I^{1/2}(X_t) \over 2} \right) \right]^{1/2} dB_t
\big. \\
 \ \ + \, \Bigg[\ell(X_t) \, \ell'(X_t)
  \ \Phi \left({-I^{1/2}(X_t) \ell(X_t) \over 2}\right)
\big. \\
 \ \ - \, \ell^2(X_t) \left({\ell(X_t) I^{1/2}(X_t)\over 2} \right)'
	\phi \left( {-I^{1/2}(X_t) \ell(X_t) \over 2} \right) \Bigg] dt
\, .
\end{multline}
The same equation holds for $X_t<0$,
except with the sign of the drift reversed.
\end{theorem}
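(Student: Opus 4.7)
The plan is to reduce to the single-mode ($J=1$) setting and apply the generator-convergence machinery of \cite{atchade2011towards,roberts2014minimising}. On the event $\{X_t^{(d)}>0\}$ the chain sits in mode~1 and, by construction, the temperature accept/reject step in Algorithm~\ref{VALPS} depends only on $g_1$ through the ratio $g_1^{\beta_{\newI}}(X_n)/g_1^{\beta_{I(n)}}(X_n)$, with $X_n$ refreshed from $g_1^{\beta(n)}$ at every step; hence on this event the $\beta$-dynamics coincide exactly with those of the one-mode ALPS with target $g_1$. The case $X_t<0$ is identical with $g_1 \to g_2$ and an overall sign change that flips the drift, so it suffices to establish~\eqref{Xdifeqn} for $X_t>0$.

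Next I would compute the infinitesimal generator of the sped-up chain on each compact $K\subset(1,\betamax)$. For $f\in C^2_c(K)$, letting $\beta^\pm$ be the neighbouring grid points (so $\beta^+-\beta=\ell(\beta)/\sqrt d$ and $\beta-\beta^-=\ell(\beta)/\sqrt d - \ell(\beta)\ell'(\beta)/d + O(d^{-3/2})$),
\[
\mathcal{G}^{(d)} f(\beta) = \tfrac d2\,\E\bigl[\alpha_+\bigl(f(\beta^+)-f(\beta)\bigr)\bigr] + \tfrac d2\,\E\bigl[\alpha_-\bigl(f(\beta^-)-f(\beta)\bigr)\bigr],
\]
with $\alpha_\pm = \min\{1,\,g_1^{\beta^\pm}(X)/g_1^{\beta}(X)\}$ for $X\sim g_1^\beta$. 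Setting $h(\beta)=\log\int g_1(x)^\beta\,dx$, Taylor expansion gives $\log\alpha_\pm = (\beta^\pm-\beta)(\log g_1(X)-h'(\beta)) - \tfrac12(\beta^\pm-\beta)^2 h''(\beta) + O((\beta^\pm-\beta)^3)$. The iid assumption~\eqref{iidass} makes $\log g_1(X)-h'(\beta)$ a sum of $d$ centred iid terms with per-coordinate variance $I(\beta)$, so by CLT the stochastic part of $\log\alpha_\pm$ is asymptotically $\mathcal{N}(0,\ell^2(\beta) I(\beta))$ while the deterministic part tends to $-\tfrac12\ell^2(\beta) I(\beta)$. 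The classical identity $\E[\min(1,e^{Z-\sigma^2/2})]=2\Phi(-\sigma/2)$ for $Z\sim\mathcal{N}(0,\sigma^2)$ then yields the leading acceptance $2\Phi(-\ell(\beta) I^{1/2}(\beta)/2)$.

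Combining this with second-order Taylor expansion of $f(\beta^\pm)-f(\beta)$ produces the coefficients of~\eqref{Xdifeqn}. The $f''$ contribution is immediate and gives the squared diffusion coefficient $2\ell^2(\beta)\Phi(-\ell I^{1/2}/2)$. The drift has two sources: the $O(1/d)$ step asymmetry $\beta^++\beta^--2\beta = \ell(\beta)\ell'(\beta)/d + O(d^{-3/2})$ yields the $\ell\ell'\Phi(\cdot)$ term, while the $O(1/\sqrt d)$ gap between $\E\alpha_+$ and $\E\alpha_-$, arising from the $\beta$-dependence of $\sigma(\beta)=\ell(\beta)I^{1/2}(\beta)$ and evaluated by differentiating $2\Phi(-\sigma/2)$, yields the $-\ell^2(\tfrac12\ell I^{1/2})'\phi(\cdot)$ term. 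I would close the argument via a standard Ethier--Kurtz generator-convergence theorem on $K$: tightness is immediate from the uniformly bounded step sizes and acceptance probabilities, the limiting SDE has smooth, locally non-degenerate coefficients on $K$ and hence a unique weak solution until exit from $K$, and convergence of generators on $C^2_c(K)$ yields weak convergence of the processes killed at $\partial K$. The main technical obstacle---handled as in~\cite{atchade2011towards,roberts2014minimising}---is proving the CLT/Taylor approximation of $\log\alpha_\pm$ is uniform in $\beta\in K$ so that the limit passes through $\E\min(1,\cdot)$, and extracting the $O(1/\sqrt d)$ drift corrections correctly; the only genuinely new observation needed here is the above reduction showing that within a single mode Algorithm~\ref{VALPS} has the same temperature dynamics as the single-mode process, so the calculation transfers verbatim.
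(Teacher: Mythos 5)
Your proposal is correct and follows essentially the same route as the paper: the key step in both is the observation that, away from $\betamax$, the temperature dynamics within a single mode coincide with the one-mode ($J=1$) simulated-tempering process, after which the paper simply invokes Theorem~6 of \cite{roberts2014minimising} while you additionally sketch the generator/Taylor/CLT computation underlying that cited result. The extra detail you supply (the $2\Phi(-\sigma/2)$ acceptance limit and the two drift contributions from step asymmetry and the $O(d^{-1/2})$ variation of the acceptance probability) is consistent with the machinery of \cite{atchade2011towards,roberts2014minimising} and with~\eqref{Xdifeqn}.
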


As a check, \eqref{Xdifeqn} satisfies the general relation $\mu(x)
= \half \sigma^2(x) \ddx \log \pi(x) + \sigma(x) \sigma'(x)$,
which implies that $\pi$ is {\it locally invariant} for $X^{(d)}$,
i.e.\ that its generator $G$ has $\pi \, (G f)(x) = 0$
for appropriate smooth $f$ and for $x$ in the interior of the domain,
That is, $\pi$ is
stationary for $X^{(d)}$ locally within each mode, as expected.

However, Theorem~\ref{Theorem:Xdiffusion} describes only what
happens on each mode separately; it says nothing about the mode-jumphing
process itself.  Moreover, its state space $(-\infty,-1]\cup [1,\infty)$
is not connected.  In fact,
we will see below that as $d\to\infty$,
the value $\betamax$ will go to infinity and
hence never be reached in finite time.
To resolve these issues, we make several
transformations on the $X^{(d)}_t$ process.
First, for $|x| \ge 1$, we define
$$
h(x) \ = \
\int_1^{|x|} {1\over \ell (u)} du
\, .
$$
(For example, in the Exponential Power Family case,
$I(\beta) \propto 1/\beta^2$, so~\eqref{propelleqn} gives
$\ell(\beta) = I_0^{-1/2}(\beta) \, \ell_0 \propto \beta$,
whence
$
h(x) \, = \, \int_1^{|x|} {1 \over \ell(u)} \, du
\, \propto \, \int_1^{|x|} {1 \over u} \, du
\, = \, \log|x|
$.)
We then set
\begin{equation}\label{Htransform}
H_{t \, (h(\betamax))^2}^{(d)}
\ = \
\sign\left(X_{t \, h(\betamax)^2}^{(d)}\right) \ \left[ 1 +
    {h\left( X^{(d)}_{t \, h(\betamax)^2} \right) \over h(\betamax)} \right]
\, .
\end{equation}
Hence, $1 \le H_t^{(d)} \le 2$ in the first mode,
and $-1 \ge H_t^{(d)} \ge -2$ in the second mode.
Also, $H_t^{(d)}$ speeds up $X_t^{(d)}$ by a factor of $h(\betamax)^2$,
and hence moves at Poisson rate $d \, h(\betamax)^2$.
This new process $H_t^{(d)}$ satisfies the following.

\begin{theorem}\label{Theorem:Hdiffusion}
Under the set-up and assumptions of Theorem~\ref{Theorem:Xdiffusion},
on $(-2,-1) \cup (1,2)$ (i.e., away from its boundary points),
the process $\{H_t^{(d)}\}$ from~\eqref{Htransform}
converges weakly in the Skorokhod topology
as $d\to\infty$ to a limiting diffusion $H$ which satisfies
\begin{equation}\label{dHt}
dH_t \ = \
\left[ 2 \, \Phi\left( {- \ell(X_t) I^{1/2}(X_t) \over 2} \right)
					\right]^{1/2} dB_t
+ \ \ell(X_t)
  \, \Bigg[ \Phi \left( {-I^{1/2}(X_t) \ell(X_t) \over 2} \right) \Bigg]' dt
\, .
\end{equation}
Furthermore, $H$ leaves constant (uniform) densities locally invariant.
\end{theorem}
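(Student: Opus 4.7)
The plan is to derive the SDE for $H$ by applying It\^o's formula to the transformation $x \mapsto \sign(x)[1 + h(|x|)/h(\betamax)]$ on the limit diffusion $X$ from Theorem~\ref{Theorem:Xdiffusion}, incorporating the time speed-up by $h(\betamax)^2$, and finally upgrading from weak convergence of $X^{(d)}$ to weak convergence of $H^{(d)}$. Throughout I would use the Proportionality Condition~\eqref{propcond}--\eqref{propelleqn}: it forces $\ell(\beta)\,I^{1/2}(\beta) = \ell_0/\sqrt{r_j}$ to be constant on mode~$j$, and hence $\Phi(-\ell(\beta)I^{1/2}(\beta)/2)$ to be constant as well, which is what makes~\eqref{dHt} well-defined in the limit (even though a literal $X_t$ in its coefficients would diverge as $d\to\infty$).

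Working on mode~1 ($X_t>0$), I would set $Y_t := 1 + h(X_t)/h(\betamax)$. Since $h'(x) = 1/\ell(x)$ and $h''(x) = -\ell'(x)/\ell^2(x)$, writing~\eqref{Xdifeqn} as $dX_t = \mu(X_t)\,dt + \sigma(X_t)\,dB_t$ with $\sigma^2 = 2\ell^2\,\Phi(\cdot)$, It\^o's formula gives
\begin{equation*}
dY_t \ = \ \frac{\sqrt{2\Phi(\cdot)}}{h(\betamax)}\,dB_t \ + \ \frac{1}{h(\betamax)}\left[\frac{\mu(X_t)}{\ell(X_t)} - \frac{\ell'(X_t)\,\sigma^2(X_t)}{2\,\ell^2(X_t)}\right]dt.
\end{equation*}
A short computation shows that the two $\ell'(X_t)\Phi(\cdot)$ contributions inside the bracket cancel, collapsing the drift to $-\ell(X_t)\,(\ell(X_t)I^{1/2}(X_t)/2)'\,\phi(\cdot)/h(\betamax)$.

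The Proportionality Condition then finishes the identification: $(\ell I^{1/2}/2)'\equiv 0$ on each mode, so the drift of $Y_t$ vanishes, while $\Phi(-\ell I^{1/2}/2)$ reduces to a mode-dependent constant $p_j := \Phi(-\ell_0/(2\sqrt{r_j}))$. Setting $H_t := Y_{t\,h(\betamax)^2}$ multiplies the diffusion by $h(\betamax)$ and the drift by $h(\betamax)^2$, yielding $dH_t = \sqrt{2p_1}\,d\tilde B_t$ on mode~1 and, by the identical computation with the sign-reversed drift from Theorem~\ref{Theorem:Xdiffusion}, $dH_t = \sqrt{2p_2}\,d\tilde B_t$ on mode~2. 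This matches~\eqref{dHt}, whose $[\Phi(\cdot)]'\,dt$ term also vanishes under~\eqref{propcond}--\eqref{propelleqn}. Local invariance of the uniform density is then immediate: on each side $H$ is scaled Brownian motion with constant coefficients, so the invariance condition $\mu_H = \half(\sigma_H^2)'$ collapses to $0=0$.

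The main obstacle is the rigorous weak convergence in the Skorokhod topology. Theorem~\ref{Theorem:Xdiffusion} provides $X^{(d)}\Rightarrow X$, but the transformation $f_d(x) = \sign(x)[1 + h(|x|)/h(\betamax)]$ depends on $d$, the time change by $h(\betamax)^2$ diverges, and $X^{(d)}$ lives on the growing domain $[-\betamax,-1]\cup[1,\betamax]$, so the continuous mapping theorem does not apply off-the-shelf. I would instead redo the convergence argument at the level of $H^{(d)}$ directly, computing its one-step conditional mean and variance from Algorithm~\ref{VALPS} composed with $f_d$ and the $h(\betamax)^2$ speed-up (using the same moment expansions that underlie Theorem~\ref{Theorem:Xdiffusion}), and verifying via an Ethier--Kurtz convergence-of-generators criterion on compact subsets of $(-2,-1)\cup(1,2)$ that these coefficients converge to the ones of the limiting SDE identified above.
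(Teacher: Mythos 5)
Your core computation is the paper's: both apply It\^o's formula to $h$ (with $h'=1/\ell$ and $h''=-\ell'/\ell^2$) along the limiting diffusion of Theorem~\ref{Theorem:Xdiffusion}, observe that the two $\ell'(X_t)\,\Phi(\cdot)$ contributions cancel, and use $\Phi'=\phi$ to package the surviving drift as $\ell(X_t)\,\big[\Phi\big(-\ell(X_t)I^{1/2}(X_t)/2\big)\big]'$; the local-invariance check via $\mu=\sigma\sigma'$ is also the one the paper performs. The substantive deviation is that you invoke the Proportionality Condition~\eqref{propcond}--\eqref{propelleqn} throughout, whereas Theorem~\ref{Theorem:Hdiffusion} does not assume it --- that hypothesis enters only at Corollary~\ref{intBMcor}. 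Under your reading the drift in~\eqref{dHt} is identically zero and the local-invariance verification degenerates to $0=0$, so what you actually prove is Corollary~\ref{intBMcor} rather than the theorem as stated; the paper derives~\eqref{dHt} with its generally nonvanishing drift and checks $\sigma\sigma'=\mu$ for it without Proportionality. That said, your diagnosis of why you felt forced to specialise is sound: the definition~\eqref{Htransform} rescales space by $1/h(\betamax)$ and time by $h(\betamax)^2$, which leaves the diffusion coefficient unchanged but multiplies the drift by $h(\betamax)\to\infty$, so the general statement is only coherent if one reads $H_t=h(X_t)$ without the $d$-dependent normalisation (as the paper's proof silently does) or if the drift vanishes. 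You should still record the general form of~\eqref{dHt} before specialising, since the theorem claims it.

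On the convergence itself you are more careful than the paper: the paper computes the SDE of $h(X)$ for the limit $X$ and asserts the conclusion for $H^{(d)}$, whereas you correctly note that the $d$-dependent map and time change prevent a naive continuous-mapping argument and propose redoing the Ethier--Kurtz generator computation for $H^{(d)}$ directly on compacta of $(-2,-1)\cup(1,2)$. That extra step is a genuine strengthening rather than a detour, and it is consistent with how the paper itself handles the harder boundary analysis in Theorem~\ref{Theorem:skewBM}.
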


To make further progress, we now use the Proportionality
Condition~\eqref{propcond},
with corresponding inverse-temperature spacing~\eqref{propelleqn}.
It then follows from the extended definition~\eqref{extendI} that
$\ell(X_t) \, I^{1/2}(X_t) = \ell_0 r_1^{1/2}$ for $X_t < 0$,
and $\ell(X_t) \, I^{1/2}(X_t) = \ell_0 r_2^{1/2}$ for $X_t > 0$,
with $\Big[ \ell(X_t) \, I^{1/2}(X_t) \Big]' = 0$ for all $X_t \not= 0$.
Hence, Theorem~\ref{Theorem:Hdiffusion} immediately gives:

\begin{corollary}\label{intBMcor}
Assume the set-up and assumptions of Theorem~\ref{Theorem:Xdiffusion},
and also the Proportionality Condition~\eqref{propcond}
with inverse-temperature spacing~\eqref{propelleqn}.
Then as $d\to\infty$, the process $\{H_t^{(d)}\}$ converges weakly in the
Skorokhod topology to a limit process $H$
on $(-2,-1)$ and on $(1,2)$, i.e.\ away from its boundary points.
Furthermore, $H$ is a diffusion, with drift~0, and with
diffusion coefficient which
is constant on each of the two intervals $(-2,-1)$ and $(1,2)$.
Specifically,
$$
dH_t \ = \ s(H_t) \ dB_t
\, ,
$$
where $s(H_t) = s_1$ for $H_t \in (1,2)$,
and $s(H_t) = s_2$ for $H_t \in (-2,-1)$, with
\begin{equation}\label{sdef}
s_i \ := \
\left[ 2 \, \Phi\left( - \half \ell_0 r_i^{1/2} \right) \right]^{1/2}
\, .
\end{equation}
\end{corollary}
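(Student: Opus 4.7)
The plan is to apply Theorem~\ref{Theorem:Hdiffusion} directly and then collapse its limiting SDE using the extra structure provided by the Proportionality Condition. Since Theorem~\ref{Theorem:Hdiffusion} already yields the desired weak convergence in the Skorokhod topology on $(-2,-1) \cup (1,2)$, no further tightness or martingale-problem identification step is needed here; the corollary is a computational specialization.

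First I would unpack the limiting SDE~\eqref{dHt} mode by mode. Using the extended definition~\eqref{extendI}, for $X_t$ in mode~$j$ the product $\ell(X_t) \, I^{1/2}(X_t)$ equals $\ell_0 \, I_0^{-1/2}(|X_t|) \cdot I_j^{1/2}(|X_t|)$, which by the Proportionality Condition~\eqref{propcond} and the prescribed spacing~\eqref{propelleqn} collapses to a constant that depends only on $j$, $\ell_0$, and $r_j$. This is precisely the constancy relation asserted in the sentence immediately preceding the corollary.

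Second, once $\ell(X_t) \, I^{1/2}(X_t)$ is recognized as piecewise constant away from the origin, the derivative $\big[\Phi(-I^{1/2}(X_t)\ell(X_t)/2)\big]'$ appearing in~\eqref{dHt} vanishes on each of the open sets $\{X_t>0\}$ and $\{X_t<0\}$, so the entire drift term in~\eqref{dHt} disappears there. The diffusion coefficient $[2\Phi(-\ell(X_t)I^{1/2}(X_t)/2)]^{1/2}$ is likewise piecewise constant, equal to $s_1$ on $(1,2)$ and $s_2$ on $(-2,-1)$, where $s_1,s_2$ are as defined in~\eqref{sdef}. This yields exactly the SDE $dH_t = s(H_t) \, dB_t$ claimed in the statement.

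There is no substantive obstacle here: conditional on Theorem~\ref{Theorem:Hdiffusion}, the corollary is a short bookkeeping argument that hinges on the fact that $\ell \cdot I^{1/2}$ is preserved under the optimal spacing~\eqref{propelleqn} — essentially the same cancellation that motivates the choice of $\ell$ in the first place. The only point I would be careful about is that the statement is restricted to the interior of each of the two intervals; the behavior at the origin, which is ultimately the mechanism producing the skew-Brownian-motion limit of Theorem~\ref{Theorem:skewBM}, is deliberately not addressed here and would require a separate argument to glue the two constant-coefficient Brownian motions across $0$.
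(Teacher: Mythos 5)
Your proposal is correct and is essentially identical to the paper's own argument: the paper offers no separate proof, simply observing in the sentences preceding the corollary that under~\eqref{propcond} and~\eqref{propelleqn} the product $\ell(X_t)\,I^{1/2}(X_t)$ is constant on each of $\{X_t>0\}$ and $\{X_t<0\}$, so the drift in~\eqref{dHt} vanishes and the diffusion coefficient reduces to the constants $s_i$ of~\eqref{sdef}. Your added remark that the gluing at the origin is deliberately deferred to Theorem~\ref{Theorem:skewBM} matches the paper's structure exactly.
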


Next, we need to join up the two parts of the domain $[-2,-1] \cup
[1,2]$ of the process $H_t^{(d)}$.
Now, the original process can jump
between modes when at the coldest temperture $\betamax$, corresponding
to the values $\pm 2$ for the transformed process $H_t^{(d)}$.  Hence, we let
\begin{equation}\label{Zdef}
Z_t^{(d)} \ = \ 
2 \ \sign(H_t^{(d)}) - H_t^{(d)} \ = \
\begin{cases}
2 - H_t^{(d)} \, , &\ H_t^{(d)} \ge 1, \ \textrm{i.e.\ in mode~1} \\
-2 - H_t^{(d)} \, , &\ H_t^{(d)} \le -1, \ \textrm{i.e.\ in mode~2} \\
\end{cases}
\end{equation}
so that $Z_t^{(d)}$ has domain $[-1,1]$ with mode-jumping at~0.


However, by Corollary~\ref{intBMcor}, the limit of the process
$Z_t^{(d)}$ will still have diffusion coefficient
$s_1$ or $s_2$ on its positive
and negative parts.  We thus rescale the process by setting
\begin{equation}\label{Wdef}
W^{(d)}_t \ = \ s(Z^{(d)}_t)^{-1} \, Z^{(d)}_t
\, .
\end{equation}
(So, to recap, $W_t^{(d)}$ is defined in~\eqref{Wdef} in terms of
$Z_t^{(d)}$, which is defined in~\eqref{Zdef} in terms of
$H_t^{(d)}$, which is defined in~\eqref{Htransform} in terms of
$X_t^{(d)}$, which is in turn defined in~\eqref{eq:xtddef} in terms
of the original inverse-temperature process $\beta^{(d)}(t)$,
which itself arises from running Algorithm~\ref{VALPS}.)
Then $W^{(d)}_t$ has domain $[-{1 \over s_2},{1 \over s_1}]$,
and limit which is
actual Brownian motion on each of $(-{1 \over s_2},0)$ and $(0,{1 \over s_1})$.
The precise limit of this process requires the notion of {\it skew
Brownian motion}, a generalisation of usual Brownian motion that,
intuitively, behaves just like a Brownian motion except that the sign
of each excursion from~0 is chosen using an independent Bernoulli random
variable; for further details and constructions and discussion see e.g.\
\cite{Lejay2006}.  In terms of skew Brownian motion, we have:

\begin{theorem}\label{Theorem:skewBM}
Under the assumptions of Corollary~\ref{intBMcor},
with $s_i$ as in~\eqref{sdef},
the process $\{W_t^{(d)}\}$ from~\eqref{Wdef} converges weakly in the
Skorokhod topology as $d\to\infty$
to a limit process $W$ which is skew Brownian
motion on $[-{1 \over s_2},{1 \over s_1}]$, with reflecting boundaries,
and with excursion probabilities
at~0 proportional to $w_1 s_1$ (to go positive) and $w_2 s_2$
(to go negative).
\end{theorem}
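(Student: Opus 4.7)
The plan is to separately analyze the limit of $\{W^{(d)}\}$ on the two open halves $(-1/s_2,0)$ and $(0,1/s_1)$, at the reflecting boundaries $\pm 1/s_i$, and at the skew point~$0$. The first of these is essentially settled by Corollary~\ref{intBMcor}; the reflecting boundaries are straightforward; identifying the correct skew condition at~$0$ is the substantive work.

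First I would establish tightness of $\{W^{(d)}\}$ in the Skorokhod topology via standard criteria (for example, Aldous's), using that $W^{(d)}$ lives in the compact interval $[-1/s_2,1/s_1]$ and that its jumps are of size $o(1)$ as $d\to\infty$. On the interior of each half, Corollary~\ref{intBMcor} identifies the limit of $H^{(d)}$ as a Brownian motion with coefficient $s_i$ on each of $(1,2)$ and $(-2,-1)$; since $Z = 2\,\sign(H) - H$ is affine on each mode and $W = Z/s(Z)$ rescales by $1/s_j$ on side~$j$, the limit $W$ satisfies $dW_t = dB_t$ on each open half-interval. The boundaries $\pm 1/s_i$ correspond to $\beta = 1$; since Algorithm~\ref{VALPS} always rejects proposals with $\newI = 0$, the $\beta$ process reflects at~$1$, and this passes through the transformations to give reflecting boundaries for $W$.

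The central point is the behaviour at~$0$, corresponding to $\beta = \betamaxsimple$ where the auxiliary mode-jumping step selects mode~$j$ with probability $w_j$. I would identify the skew parameter via the martingale problem for skew Brownian motion: for test functions $f$ that are $C^2$ on each side of~$0$, continuous at~$0$, satisfy the Neumann conditions $f'(\pm 1/s_i) = 0$, and satisfy the skew condition $w_1 s_1 \, f'(0+) = w_2 s_2 \, f'(0-)$, verify that $f(W_t^{(d)}) - \frac12 \int_0^t f''(W_s^{(d)})\, ds$ is asymptotically a martingale. Equivalently, one can argue via excursion theory: the excursions of $W^{(d)}$ from~$0$ should converge to i.i.d.\ standard Brownian excursions (reflected at $\pm 1/s_i$) whose signs are Bernoulli with probability $w_1 s_1/(w_1 s_1 + w_2 s_2)$ of being positive. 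The factor $s_j$ arises because, under the rescaling $W = Z/s_j$ on side~$j$, the occupation time formula gives $L^{0\pm}_t(W) = s_j \, L^{0\pm}_t(Z)$; although the algorithm produces excursions in $Z$ with sign probabilities proportional to $w_1$ and $w_2$, in the rescaled $W$-coordinate the excursion intensities per unit local time at~$0$ become proportional to $w_j s_j$.

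The main obstacle is this identification at~$0$: rigorously showing that the discrete mode-jumping events contribute exactly the drift $(2p-1)\,dL^0_t(W)$ with $p = w_1 s_1/(w_1 s_1 + w_2 s_2)$ in the Tanaka-type decomposition of the limit, and that the reflection at $\beta = 1$ does not interfere with this skew structure. This requires quantitative control over the joint scaling of local-time accumulation at $\beta = \betamaxsimple$ and the Bernoulli mode-selection as $d\to\infty$, together with an approximation argument matching the discrete excursions to those of the limiting skew Brownian motion.
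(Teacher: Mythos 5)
Your overall strategy is essentially the paper's: the martingale-problem formulation with test functions that are $C^2$ on each half, satisfy $w_1 s_1 f'(0+) = w_2 s_2 f'(0-)$ at the origin and Neumann conditions at the endpoints is exactly the generator-convergence-on-a-core argument (via Ethier--Kurtz) that the paper carries out, and you correctly identify both the skew condition and the reason the factors $s_j$ appear (the rescaling $W = Z/s_j$ changes the excursion intensity per unit local time). Two points need attention, one substantive. First, the minor one: for the generator computation at $0$ to close, your core must also impose matching one-sided second derivatives $f''(0+)=f''(0-)$; the Taylor expansion at $0$ produces the second-order term $\frac12\left[w_1 f''(0+)+w_2 f''(0-)\right]$, which equals $\frac12 f''(0)$ only under that matching condition.

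The substantive gap is your claim that the reflecting boundaries are ``straightforward.'' They are not: at $\beta=1$ the chain proposes up or down with probability $1/2$ each and the downward proposal is always rejected, so the discrete generator at $\wmax$ is $\tfrac12 A\,[f(x+h)-f(x)]$ rather than $\tfrac12 A\,[f(x+h)+f(x-h)-2f(x)]$; with $f'(\wmax)=0$ this yields $\tfrac14 A h^2 f''$ instead of the interior value $\tfrac12 A h^2 f''$ --- off by a factor of $2$, so the generators do \emph{not} converge uniformly up to the boundary. The paper repairs this by time-changing the process (doubling the clock rate at the boundary) and then proving that the modification is invisible in the limit because the occupation time of the boundary state vanishes as $d\to\infty$; that occupation-time estimate is itself a nontrivial argument about reflecting random walk (Propositions~\ref{refrw} and~\ref{occtime} and Corollary~\ref{refcor} in the Appendix). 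In your martingale formulation the same issue reappears as the need to show $\int_0^t \bigl[G^{(d)}f(W^{(d)}_s)-\tfrac12 f''(W^{(d)}_s)\bigr]\,ds \to 0$ despite a non-vanishing discrepancy on the boundary set, which again requires exactly this occupation-time control. Your proposal as written does not supply it. Your alternative excursion-theoretic route at $0$ is a genuinely different (and plausible) identification of the skew parameter, but it is only sketched and would still need the same boundary analysis.
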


The above theorems are all proven in Section~\ref{sec-proofs} below.
First, we use them to investigate the computational
complexity of the ALPS algorithm.

\section{Computational Complexity}
\label{sec-complexity}

Theorem~\ref{Theorem:skewBM} above has implications for the computational
complexity of the ALPS algorithm.
Indeed, it shows that the limiting process $W$ does not depend at all on
the dimension $d$, and hence has convergence time $O(1)$ as $d\to\infty$.
However, $W$ was derived from the processes $H_t^{(d)}$ and $Z_t^{(d)}$,
which sped up time
by a factor of $(h(\betamax))^2$ from the process $X_t^{(d)}$, which itself
sped up time by a factor $d$.
That is, $W$ was sped up by a total factor of $d [h(\betamax)]^2$.
So, in the original scaling, the
convergence time is $O(d [h(\betamax)]^2)$.

More formally, it is shown in~\cite[Theorem~1]{RRcomplexity} that
such diffusion limit convergence implies that for any $\epsilon>0$,
the {\it convergence time} $T_\epsilon$ for each component of
the original process
to get within $\epsilon$ of stationary in Kantorovich-Rubinstein distance,
averaged over starting state chosen from stationarity, will be of the same
order as the speedup factor.  So, combining Theorem~\ref{Theorem:skewBM}
and~\cite[Theorem~1]{RRcomplexity} shows that for the $\beta$ process of
the vanilla ALPS algorithm, the convergence time
$T_\epsilon$ is $O(d [h(\betamax)]^2)$.
Furthermore, this convergence time is indeed an appropriate measure of the
algorithm's efficiency, since it is proportional
to the rate at which the $\beta$ values can complete a ``round
trip'' from one sample at $\beta=1$, to a mode jump at
$\beta=\betamaxsimple$, to another sample at $\beta=1$, and hence mix well
between modes; similar approaches
appear in \cite{atchade2011towards, Jacka2019, Syed2020}.
\gcheck

This raises the question of how $h(\betamax)$ grows as a function of $d$.
It is proven in~\cite{TawnEtAl2020} that for the ALPS
process to mix modes efficiently, we need the maximum inverse-temperature
value $\betamax$ to grow linearly with dimension, i.e.\ we need
to choose $\betamax \propto d$.
And, in the Exponential Power Family case, as mentioned above,
$I(\beta) \propto 1/\beta^2$ which implies by~\eqref{propelleqn} that
$h(x) \propto \log|x|$,
so $h(\betamax) \propto \log(d)$.  Hence, the complexity order
$O(d [h(\betamax)]^2)$ equals $O\big(d \, [\log d]^2\big)$.
That is, for the inverse temperature process to
hit $\betamax$ and hence mix modes takes
$O\big(d \, [\log d]^2\big)$ iterations.

If we are not in the Exponential Power Family case, then it may no longer
be true that $I(\beta) \propto 1/\beta^2$.  However,
as $d,\beta\to\infty$, under appropriate smoothness assumptions
the densities in the
different modes will become approximately Gaussian, which corresponds to
the Exponential Power Family case with $r=2$.
And, it is proven in equation~(66) of~\cite{Tawn2018a} that
if the first four moments converge to those of a Gaussian,
then $2\beta^2 I(\beta) \to 1$, i.e.\
approximately $I(\beta) \propto 1/\beta^2$.
Hence, from~\eqref{propelleqn},
approximately $\ell(\beta) \propto \beta$, so again
$h(\betamax) \propto \log(d)$, and the complexity order is
still $O\big(d \, [\log d]^2\big)$ as before.
We summarise this conclusion as follows.


\begin{corollary}\label{complexitycor}
Under the assumptions of Corollary~\ref{intBMcor},
if either
{\bf (a)} the densities of the two modes of $\pi$ are in the
Exponential Power Family, or
{\bf (b)} the two modes' first
four moments each converge to those of a Gaussian as $d,\beta\to\infty$,
then
the convergence times $T_\epsilon$ for $\beta$ are
$O\big(d \, [\log d]^2\big)$ as $d\to\infty$.
\end{corollary}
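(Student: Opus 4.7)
The plan is to combine Theorem~\ref{Theorem:skewBM} with the general translation from diffusion limits to convergence times in \cite[Theorem~1]{RRcomplexity}, and then carefully track the total time-rescaling factor as a function of $d$.

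First I would observe that the chain of transformations producing $W^{(d)}_t$ from the raw process $\beta^{(d)}$ involves exactly two time speedups: the Poisson embedding gives a factor of $d$, and the passage from $X^{(d)}$ to $H^{(d)}$ in \eqref{Htransform} gives a further factor of $[h(\betamax)]^2$. The remaining maps $H\mapsto Z\mapsto W$ are $d$-independent, Lipschitz, invertible spatial bijections on a compact interval and so preserve convergence times up to constants. Since Theorem~\ref{Theorem:skewBM} identifies the weak limit as a fixed (dimension-free) skew Brownian motion on a bounded interval with reflecting boundaries, which mixes in $O(1)$ time, \cite[Theorem~1]{RRcomplexity} yields a convergence time $T_\epsilon$ for $\beta^{(d)}$ of order $d\,[h(\betamax)]^2$.

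It therefore suffices to show $h(\betamax)=\Theta(\log d)$ under either hypothesis. In both cases, the analysis of \cite{TawnEtAl2020} dictates that efficient mode-jumping at the cold temperature forces $\betamax$ to scale linearly with $d$. In case~(a), Section~2.4 of \cite{atchade2011towards} gives $I_j(\beta)=\beta^{-2}/r_j$ exactly, so the Proportionality Condition holds with $I_0(\beta)=\beta^{-2}$; then \eqref{propelleqn} gives $\ell(\beta)=\ell_0\beta$ and hence $h(x)=\ell_0^{-1}\log|x|$, so $h(\betamax)=\Theta(\log d)$ directly. In case~(b), equation~(66) of \cite{Tawn2018a} gives $2\beta^2 I_j(\beta)\to 1$ as $\beta\to\infty$, so $\ell(\beta)\sim\sqrt{2}\,\ell_0\,\beta$ for large $\beta$, and splitting the integral defining $h(\betamax)$ at a fixed large threshold $M$ shows $h(\betamax)=(1+o(1))\,(\sqrt{2}\,\ell_0)^{-1}\log d$. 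Either way, $d\,[h(\betamax)]^2=\Theta(d\,[\log d]^2)$, which is the claimed bound.

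The main obstacle is case~(b): there the Proportionality Condition holds only asymptotically, so one must verify that the deviation of $I_j(\beta)$ from $1/(2\beta^2)$ at moderate $\beta$ values does not inflate $h(\betamax)$ beyond $\Theta(\log d)$. This requires the Gaussian convergence of the first four moments to be uniform on an appropriate range of $\beta$, and, in particular, that $I_j(\beta)$ is bounded below on $[1,M]$ so that the integrand $1/\ell(u)$ contributes only a constant to $h(\betamax)$. A secondary subtlety is that Theorem~\ref{Theorem:skewBM} provides weak convergence only away from the boundary points of the limit's domain; to apply \cite[Theorem~1]{RRcomplexity} cleanly one must confirm that time spent near these boundaries is negligible for the mixing of $W$ and hence does not affect the order of $T_\epsilon$.
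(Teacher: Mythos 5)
Your proposal is correct and follows essentially the same route as the paper: Theorem~\ref{Theorem:skewBM} plus \cite[Theorem~1]{RRcomplexity} give $T_\epsilon = O(d\,[h(\betamax)]^2)$ from the cumulative speedup factors $d$ and $[h(\betamax)]^2$, and then $\betamax \propto d$ together with $\ell(\beta)\propto\beta$ (exactly in case~(a) via Section~2.4 of \cite{atchade2011towards}, asymptotically in case~(b) via equation~(66) of \cite{Tawn2018a}) yields $h(\betamax)=\Theta(\log d)$. Your added care in case~(b) --- splitting the integral at a fixed threshold and requiring $I_j$ bounded below on the compact range --- is a refinement the paper elides, but it does not change the argument.
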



In a different direction, the paper~\cite{Tawn2018a}
introduces a {\it QuanTA Algorithm}, which modifies parallel
tempering's usual temperature-swap moves by adjusting the $x$ space
in order to permit larger moves in the inverse temperature space.
As a result of this, they show~\cite[Theorem~2]{Tawn2018a} that the resulting
$\ell(\beta)$ function is then proportional to $\beta^{k/2}$ for some $k>2$
(instead of proportional to $\beta$).  In that case,
$$
h(\betamax)
\ = \ \int_1^{\betamax} {1 \over \ell(u)} \, du
\ \le \ \int_1^\infty {1 \over \ell(u)} \, du
\ \propto \ \int_1^\infty u^{-k/2} \, du
\ = \ (k/2) - 1
\ < \ \infty
\, ,
$$
so that $h(\betamax)$ is $O(1)$ rather than $O(\log d)$.
This means that the convergence complexity $O(d [h(\betamax]^2)$
becomes simply $O(d)$, i.e.\ the $[\log d]^2$ factor vanishes.  We
summarise this observation as follows.

\begin{corollary}\label{QuanTAcor}
Under the assumptions of Corollary~\ref{intBMcor},
if we instead run the version of the ALPS algorithm which uses
the QuanTA modification of~\cite{Tawn2018a}, then
the convergence times $T_\epsilon$ for $\beta$ are
$O\big(d \, [\log d]^2\big)$ as $d\to\infty$.
\end{corollary}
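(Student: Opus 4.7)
The plan is to reuse the entire diffusion-limit machinery built up through Theorem~\ref{Theorem:skewBM} and then observe that, with the QuanTA modification, the time-change factor $d\,[h(\betamax)]^2$ collapses from the $O(d\,[\log d]^2)$ of Corollary~\ref{complexitycor} down to $O(d)$, which in particular yields the stated $O(d\,[\log d]^2)$ bound.

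First I would treat the QuanTA algorithm as a ``black-box'' input to the diffusion-limit framework. Theorems~\ref{Theorem:Xdiffusion}--\ref{Theorem:skewBM}, together with the accounting of speed-up factors that converted them into a convergence-time statement, depend on the temperature-proposal mechanism only through the $C^1$ spacing function $\ell$ in~\eqref{eqn:beta}. The QuanTA transformation of~\cite{Tawn2018a} is an affine automorphism of $x$-space attached to each temperature-swap proposal, designed so that the log-acceptance-ratio stays $O(1)$ under much wider $\beta$-increments; by~\cite[Theorem~2]{Tawn2018a} the corresponding optimal $\ell$ becomes $\ell(\beta) \propto \beta^{k/2}$ for some $k>2$. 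With this replacement $\ell$, Theorem~\ref{Theorem:skewBM} and the preceding transformations~\eqref{Htransform}, \eqref{Zdef}, \eqref{Wdef} apply verbatim, giving the same skew Brownian motion limit for $W_t^{(d)}$ and hence, via~\cite[Theorem~1]{RRcomplexity}, the bound $T_\epsilon = O(d\,[h(\betamax)]^2)$.

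Second I would evaluate $h(\betamax)$ under this new $\ell$. Since $\betamax \to \infty$ as $d \to \infty$, a direct estimate gives
\begin{equation*}
h(\betamax) \ = \ \int_1^{\betamax} {1 \over \ell(u)} \, du
\ \le \ C \int_1^\infty u^{-k/2} \, du \ < \ \infty,
\end{equation*}
the integral being finite precisely because $k>2$. Hence $h(\betamax)$ is bounded uniformly in $d$, so $[h(\betamax)]^2 = O(1)$ and $T_\epsilon = O(d)$, which is in particular $O(d\,[\log d]^2)$.

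The main technical obstacle is justifying the ``black-box'' appeal in the first step: one must verify that the proofs of Theorems~\ref{Theorem:Xdiffusion}--\ref{Theorem:skewBM} go through when the temperature-swap proposal is composed with the QuanTA affine map. The key observation is that the within-mode state is still mixed immediately to $g_{\curj}$ and the mode-jumping move at $\beta=\betamaxsimple$ is unchanged, so QuanTA only modifies the log-acceptance ratio in the temperature-changing phase. The leading-order behaviour of that modified ratio is already computed in~\cite[Theorem~2]{Tawn2018a}; substituting it into the infinitesimal-mean and infinitesimal-variance computations underlying Theorem~\ref{Theorem:Xdiffusion} reproduces the same SDE form with $\ell$ replaced by the QuanTA-tuned spacing. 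No new ideas beyond those in~\cite{Tawn2018a} and the proofs of Section~\ref{sec-proofs} are required.
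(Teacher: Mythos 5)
Your proof is correct and follows essentially the same route as the paper: invoke \cite[Theorem~2]{Tawn2018a} to get $\ell(\beta)\propto\beta^{k/2}$ with $k>2$, observe that $h(\betamax)\le\int_1^\infty u^{-k/2}\,du<\infty$ so the speed-up factor is $O(d)$, and conclude via the $T_\epsilon=O(d\,[h(\betamax)]^2)$ accounting already established for Corollary~\ref{complexitycor}. Your closing remark that $O(d)$ is \emph{a fortiori} $O(d\,[\log d]^2)$ correctly handles what appears to be a copy-paste slip in the corollary's statement (the paper's own discussion makes clear the intended conclusion is $O(d)$).
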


Comparing Corollaries~\ref{complexitycor} and~\ref{QuanTAcor}, we
see that the QuanTA modification improves the complexity bound by
a factor of $[\log d]^2$.  This is not surprising, since QuanTA was
specifically designed to make the algorithm move faster especially under
near-Gaussianity at large $\beta$, thus improving the mixing time.
This improvement is also borne out through simulation experiments;
see~\cite{Tawn2018a}.  \gcheck



\begin{remark} (More than Two Modes.)\label{MoreThanTwoRemark}
{\rm
For simplicity, all of the above proofs
assumed a mixture of just $J=2$ modes.
However, similar analysis works more
generally.  Indeed, suppose $\pi$ is a mixture of $J>2$ modes, of
weights $w_1,w_2,\ldots,w_J \ge 0$ where $\sum_{i=1}^J w_i = 1$.  Then when
$\beta(t)$ reaches $\betamax$, the process chooses one of the
$J$ modes with probability $w_i$
(due to the auxiliary mode-jumping step).
In this case, a theorem similar
to Theorem~\ref{Theorem:skewBM} could be proven by similar methods.
The processes $\{W_t^{(d)}\}$ will converge not to skew Brownian motion
but to {\it Walsh's Brownian motion}, a process
not on $[-{1\over s_2},{1 \over s_1}]$
but rather on a ``star''
shape with $J$ different line segments all meeting at the origin
(corresponding to $\betamax$).
Intuitively, this process
behaves as Brownian motion within each segment, but
chooses each excursion from the origin using an
independent random variable with probabilities $w_i$;
for further details and constructions and discussion see e.g.\
\cite{barlow1989}.
(The case $J=2$ but $w_1\not=1/2$ corresponds to skew Brownian
motion as in Theorem~\ref{Theorem:skewBM}.)
This in turn leads to the same complexity bound of
$O\big(d \, [\log d]^2\big)$ iterations
(or $O(d)$ iterations if using QuanTA) when $J>2$ as well.
}
\end{remark}

\section{Theorem Proofs}
\label{sec-proofs}

In this section, we prove the theorems stated in
Section~\ref{sec-mainresults} above.
Note that Theorem~\ref{Theorem:Xdiffusion}
essentially follows directly from previous theoretical
analysis of Simulated Tempering
in~\cite{atchade2011towards,roberts2014minimising},
and Theorem~\ref{Theorem:Hdiffusion} then
follows from some additional computations using Ito's Formula.
By contrast, Theorem~\ref{Theorem:skewBM} requires a different approach,
to show that the modified $W^{(d)}$ process converges
to reflecting skew Brownian motion,
including delicate arguments to show convergence of the
corresponding infinitesimal generators especially at the two
endpoints and at the excursion point~0.
\gcheck

\subsection{Proof of Theorem~\ref{Theorem:Xdiffusion}}

Since mixing between modes is only possible at $\betamax$, the
dynamics for other $\beta$
will be identical to the single mode case ($J=1$) as covered
in \cite{atchade2011towards,roberts2014minimising}.
It therefore follows directly from Theorem~6 of
\cite{roberts2014minimising} that as $d\to\infty$,
the process $\{X_t\}$ converges weakly, at least on $X_t>0$,
to a diffusion limit $\{X_t\}_{t \ge 0}$ satisfying~\eqref{Xdifeqn}.
The result for $X_t<0$ follows similarly.


\subsection{Proof of Theorem~\ref{Theorem:Hdiffusion}}

We assume $x\in(1,2)$; the proof for $x\in(-2,-1)$ is virtually identical.
Here $H_t = h(X_t)$, where
$h'(x) = \ell(x)^{-1}$, and $h''(x) = -\ell'(x) \ell(x)^{-2}$.
Hence, by Ito's Formula,
\begin{eqnarray*}
dH_t
&=& h'(X_t) dX_t + \frac{1}{2}  h''(X_t) d\langle X \rangle_t \\
&=& \ell(X_t)^{-1} dX_t - \frac{1}{2} \ell'(X_t) \ell(X_t)^{-2}
d\langle X \rangle_t \\
&=& \ell(X_t)^{-1}
\left[2  \ell^2(X_t)
   \Phi\left( \frac{- \ell(X_t) I^{1/2}(X_t)}{ 2} \right) \right]^{1/2} dB_t\\
 && + \ell(X_t)^{-1} \ell(X_t)  \ell'(X_t)
   \Phi \left(\frac{-I^{1/2}(X_t) \ell(X_t)}{ 2}\right) dt \\
 && - \ell^2(X_t) \left(\frac{\ell(X_t) I^{1/2}(X_t)}{ 2} \right)'
	\phi \left( \frac{-I^{1/2}(X_t) \ell(X_t)}{  2} \right) dt \\
 && - \frac{1}{2} \ell'(X_t) \ell(X_t)^{-2} 2 \ell^2(X_t)
   \Phi\left( \frac{- \ell(X_t) I^{1/2}(X_t)}{ 2} \right) dt
\end{eqnarray*}
In this last equation,
the second and fourth terms cancel.  Also, since $\Phi' = \phi$,
it follows from the chain rule that the third term can be written as
$$
- \ell^2(X_t)
\left[ \Phi \left( \frac{-I^{1/2}(X_t) \ell(X_t)}{  2} \right) \right]' dt 
\, .
$$
This gives~\eqref{dHt}.
Then, writing everything in terms of $H_t = h(X_t)$, this becomes
$$
dH_t
\ = \
\left[ 2 \, \Phi\left( {- \ell(h^{-1}(H_t)) I^{1/2}(h^{-1}(H_t)) \over 2}
	\right) \right]^{1/2} dB_t
$$
$$
  + \, \ell(h^{-1}(H_t))
  \, \Bigg[ \Phi \left( {-I^{1/2}(h^{-1}(H_t)) \ell(h^{-1}(H_t)) \over 2}
	\right) \Bigg]' dt
\, .
$$
Now, a diffusion of the form $dH_t = \sigma(H_t) dB_t +
\mu(H_t) dt$ has locally invariant distribution $\pi$ provided that
$\frac{1}{2} (\log\pi)' \sigma^2 + \sigma \sigma' = \mu$.  That holds
for constant $\pi$ if $\sigma \sigma' = \mu$.
In this case, we compute that
$$
\sigma \sigma' \ = \ \half (\sigma^2)'
\ = \ \half {d \over dH} \left[ 2 \, \Phi\left( {- \ell(h^{-1}(H))
I^{1/2}(h^{-1}(H)) \over 2} 
        \right) \right]
$$
$$
\ = \ \half \left( {dH \over dX} \right)^{-1}
{d \over dX} \left[ 2 \, \Phi\left( {- \ell(X) I^{1/2}(X) \over 2} 
        \right) \right]
$$
$$
\ = \ \half \left( \ell(X)^{-1} \right)^{-1}
\left[ 2 \, \Phi\left( {- \ell(X) I^{1/2}(X) \over 2} 
        \right) \right]'
$$
$$
\ = \ \ell(X) \left[ \Phi\left( {- \ell(X) I^{1/2}(X) \over 2} 
        \right) \right]'
\ = \ \mu
\, ,
$$
thus showing that $H$ leaves constant densities locally invariant.


\subsection{Proof of Theorem~\ref{Theorem:skewBM}}

Let $\wmin = -{1 \over s_2}$
and $\wmax = {1 \over s_1}$ be the endpoints of the domain of $W$.
By Corollary~\ref{intBMcor}, $dH_t =
s(H_t) \, dB_t$ in the interior of its domain.  Since $W_t = s(H_t)^{-1}
\, H_t$, it follows that $W_t$ behaves like
Brownian motion on $(-\wmin,0)$ and on $(0,\wmax)$.
It remains to show that the process converges weakly to skew Brownian
motion, including at the boundary points $W_t=0,\wmin,\wmax$.
We prove this result using infinitesimal generators, as we now explain.

\subsubsection{Method of Proof: Generators}

To prove the weak convergence, it suffices by Corollary~8.7 of Chapter~4 of
\cite{ethi:kurt:1986} to show (similar to previous proofs of diffusion
limits of MCMC algorithms in \cite{roberts1997weak, roberts1998optimal,
beda:rose:2008}) that the {\it infinitesimal generator} $G^{(d)}$ of the
process $W^{(d)}$ converges uniformly in~$x$ as $d\to\infty$
to the generator $G^*$ of skew Brownian motion,
when applied to a {\it core} $\D$ of functionals, i.e.\ that
$$
\lim_{d\to\infty}
\ \sup_{x \in [\wmin,\wmax]}
\ \left| G^{(d)}f(x) - G^*f(x) \right|
\ = \ 0
\, ,
\qquad f \in \D
\, ,
$$
where
$$
G^{(d)}f(x) \ := \
\lim_{\delta\searrow 0}
{\E[f(W^{(d)}_\delta) \, | \, W^{(d)}_0=x] - f(x) \over \delta}
\, .
$$


To this end, let $\D$ be the set of all functions
$f:[-\wmin,\wmax]\to \mathbb{R}$ which
are continuous and twice-continuously-differentiable on
$[\wmin,0]$ and also on $[0,\wmax]$,
with matching one-sided second derivatives $f''^+(0)=f''^-(0)$,
and skewed one-sided first derivatives satisfying
$w_1 s_1 f'^+(0)= w_2 s_2 f'^-(0)$,
and $f'(\wmax)=f'(\wmin)=0$.
Then it follows from e.g.\ \cite{liggett:2010}
and Exercise~1.23 of Chapter~VII of \cite{revuz:yor:2004})
that the generator of skew Brownian motion (with excursion weights
proportional to $w_1 s_1$ and $w_2 s_2$ respectively, and with reflections at
$\wmin$ and $\wmax$) satisfies that
$G^*f(x) = \frac{1}{2} f''(x)$ for all $f\in \D$,
where $f''(0)$ represents the common value $f''^+(0) = f''^-(0)$.
Furthermore,
$\D$ is clearly dense (in the sup norm) in the set of all
$C^2[\wmin,\wmax]$ functions, so in the language of
\cite{ethi:kurt:1986}, $\D$ serves as a core of functions for which
it suffices to prove that the generators converge.


It follows from Corollary~\ref{intBMcor}, as discussed above,
that for any fixed $f\in \D$,
\begin{equation}
\lim_{d\to\infty}
\sup_{w \in \left( \wmin,  \wmax \right) \setminus \{0\}}
|G^{(d)}f(w) - G^* f(w)| = 0
\, .
\label{eq:genconv}
\end{equation}
That is, the generators do converge uniformly to $G^*$, as required,
at least for $w \not= 0, \wmin, \wmax$, i.e.\ avoiding the mode-jumping
value~0 and the reflecting boundaries $\wmin$ and $\wmax$.
To complete the proof, it suffices to prove that
\eqref{eq:genconv} also holds at $w=0,\wmin,\wmax$, i.e.\ to prove
\begin{equation}\label{G0eqn}
\lim_{d\to\infty} G^{(d)}f(0) \ \equiv \ G^* f(0)
\ = \ \, \frac{1}{2} \, f''(0)
\, ,
\end{equation}
\begin{equation}\label{Gwmineqn}
\lim_{d\to\infty} G^{(d)}f(\wmin) \ \equiv \ G^* f(\wmin)
\ = \ \, \frac{1}{2} \, f''(\wmin)
\, ,
\end{equation}
and
\begin{equation}\label{Gwmaxeqn}
\lim_{d\to\infty} G^{(d)}f(\wmax) \ \equiv \ G^* f(\wmax)
\ = \ \, \frac{1}{2} \, f''(\wmax)
\, .
\end{equation}

\subsubsection{Verification of~\eqref{Gwmineqn} and~\eqref{Gwmaxeqn}}
\label{sec-1314}

The proofs of~\eqref{Gwmineqn} and~\eqref{Gwmaxeqn} are virtually
identical, so here we prove~\eqref{Gwmaxeqn}.

If the original inverse-temperature
process $\beta^{(d)}(t)$ proposes to move in time~1 from
inverse-temperature $1+0=1$ to $1 + \ell(1) d^{-1/2}$,
then by~\eqref{Htransform}, the $H^{(d)}_t$ process proposes to move
at Poisson rate~$[d \, h(\betamax)^2]$ from $1 + {0 \over h(\betamax)} = 1$ to
$$
1 + {h\big(1+\ell(1) d^{-1/2}\big) \over h(\betamax)}
\ = \
1 + {1 \over h(\betamax)}
\int_1^{1+\ell(1) d^{-1/2}} {1 \over \ell(u)} \, du
$$
which to first order as $d\to\infty$ is equal to
$$
1 + {1 \over h(\betamax)}
(\ell(1) d^{-1/2}) {1 \over \ell(1)}
\ = \
1 + {d^{-1/2} \over h(\betamax)}
\, .
$$
Simultaneously, the $Z^{(d)}_t$ process proposes to move from $2-1=1$
to $2 - [1 + d^{-1/2} / h(\betamax)] = 1 - d^{-1/2} / h(\betamax)$,
and the $W^{(d)}_t$ process proposes to move from $\wmax$ to
$$
(\wmax) - d^{-1/2} / s_1 h(\betamax)
\, .
$$
Let $A$ be the probability that the original $\beta^{(d)}(t)$
process accepts a move from $1$ to $1 + \ell(1) d^{-1/2}$.  Then
since $\beta^{(d)}(t)$ proposes to move from $1$ to $1 + \ell(1) d^{-1/2}$
with probability $1/2$, it actually
moves from $1$ to $1 + \ell(1) d^{-1/2}$ with
probability~$A/2$, otherwise it stays at~$1$.
So, correspondingly, $W_t^{(d)}$ moves
from $\wmax$ to $(\wmax) - d^{-1/2} / s_1 h(\betamax)$.
Furthermore, recall that $W_t^{(d)}$ moves
at Poisson rate~$[d \, h(\betamax)^2]$, so it moves
from $\wmax$ to $(\wmax) - d^{-1/2} / s_1 h(\betamax)$ at rate
$[d \, h(\betamax)^2] (A/2)$.
However, we instead consider a minor modification of the process
$W_t^{(d)}$ which speeds up time by a factor of~2 whenever it is at
$\wmax$, i.e.\ it moves from there at Poisson rate~$[d \, h(\betamax)^2] (A)$.
This is equivalent to the original $\beta^{(d)}(t)$ process ``reflecting''
by always
proposing a positive move from~$1$, instead of proposing either a positive
or a negative (always-rejected) move with probability~$1/2$ each.
We show in Section~\ref{sec-minormod} below that this minor modification will
not change the limiting distribution of the
$W_t^{(d)}$, and thus does not affect the proof.

Thus, to first order as $\delta \searrow 0$
[i.e., up to $o(1)$ errors], our modified process $W_t^{(d)}$
will move from $\wmax$ to $(\wmax) - d^{-1/2} / s_1 h(\betamax)$
at Poisson rate~$[d \, h(\betamax)^2] (A)$.
Hence, setting $x=\wmax = 1/s_1$, we have that
$$
{\E[f(W^{(d)}_{\delta}) \, | \, W^{(d)}_0=x] - f(x) \over {\delta}}
\ = \ 
[d \, h(\betamax)^2]
(A) \Big[ f\Big( (\wmax) - d^{-1/2} / s_1 h(\betamax) \Big) - f(x) \Big]
+ o(1)
\, .
$$
Then, taking a Taylor series expansion around $x=\wmax = 1/s_1$,
\begin{multline*}
{\E[f(W^{(d)}_{\delta}) \, | \, W^{(d)}_0=x] - f(x) \over {\delta}}
\ = \ - [d \, h(\betamax)^2] (A) [ d^{-1/2} / s_1 h(\betamax) ] f'(\wmax)
\\
\qquad\qquad\qquad\qquad
\ + \ \half [d \, h(\betamax)^2] (A) [ d^{-1/2} / s_1 h(\betamax) ]^2 f''(\wmax)
\ + \ O(d^{-1/2}) + o(1)
\\
\ = \ - [A d^{1/2} h(\betamax) / s_1] \, f'(\wmax)
\ + \ \half [A / s_1^2] \, f''(\wmax)
\ + \ O(d^{-1/2}) + o(1)
\, ,
\end{multline*}
Since $f\in\D$, we have $f'(\wmax)=0$, so the first term vanishes.
Furthermore, it is shown in \cite{TawnRR2020} that as $d\to\infty$,
$$
A \ \to \
2 \, \Phi\left( {- \ell_0 \over 2 \sqrt{r_1}} \right)
 \ = \ s_1^2
\, .
$$
Hence,
$$
{\E[f(W^{(d)}_{\delta}) \, | \, W^{(d)}_0=x] - f(x) \over {\delta}}
\ = \ 0 \ + \ \half \, [1] \, f''(\wmax) \ + \ O(d^{-1/2}) + o(1)
\, ,
$$
so that
$$
\lim_{d\to\infty}
G^{(d)}f(\wmax)
\ = \ \lim_{d\to\infty} \ \lim_{\delta \searrow 0}
{\E[f(W^{(d)}_\delta) \, | \, W^{(d)}_0=x] - f(x) \over \delta}
\ = \ \half f''(\wmax)
\ = \ G^*(\wmax)
\, ,
$$
as required.

\subsubsection{Verification of~\eqref{G0eqn}}

To prove~\eqref{G0eqn}, note that if the original inverse-temperature
process $\beta^{(d)}(t)$ proposes to move in time~1
from $\betamax$ to $\betamax - \ell(\betamax) d^{-1/2}$ in
one of the two modes (with probabilities $w_1$ and $w_2$ respectively),
then by~\eqref{Htransform} the $H^{(d)}_t$ process proposes to move
at rate~$[d \, h(\betamax)^2]$
from $1+{h(\betamax) \over h(\betamax)} = 2$ to
$$
\pm \left[ 1 + {h\big( \betamax - \ell(\betamax) d^{-1/2} \big)
		\over h(\betamax)} \right]
\ = \ \pm \left[ 2 - {\int_{\betamax-\ell(\betamax) d^{-1/2}}^{\betamax}
			{1 \over \ell(u)} \, du
		\over h(\betamax)} \right]
$$
$$
\ \approx \ \pm \left[ 2 - \left( \ell(\betamax) d^{-1/2} \right)
		{1 \over \ell(\betamax)} \right]
\ = \ \pm (2-d^{-1/2})
\, .
$$
Simultaneously,
the $Z^{(d)}_t$ process proposes to move from $2-2=0$ to
$\pm 2 - [\pm (2 - d^{-1/2})] = \pm d^{-1/2}$,
and the $W^{(d)}_t$ process proposes to move from $0$ to
either $d^{-1/2}/s_1$ or $-d^{-1/2}/s_2$.  Hence, similar to the above
(but without the minor modification),
with $x=0$
we have to first order as $\delta\searrow 0$ that
\begin{multline}\label{Gdinit}
{\E[f(W_{\delta}) \, | \, W_0=x] - f(x) \over {\delta}}
\\
\ = \ [d \, h(\betamax)^2] \, \Big( w_1 \alpha_1
\left[f \left( d^{-1/2} / s_1 \right) - f(0) \right]
\, + \,
w_2 \alpha_2
\left[f \left(- d^{-1/2} / s_2 \right) - f(0) \right]
\Big)
+ o(1)
\, ,
\end{multline}
where $\alpha_i$ is the acceptance probability for the original
process to accept a proposal
to increase the inverse-temperature
from $\betamax$ to $\betamax - \ell(\betamax) d^{-1/2}$ in mode~i.
Now, the argument in~\cite{TawnRR2020} shows that as $d\to\infty$ we have
$$
\alpha_i \ \to \
2 \, \Phi\left( {- \ell_0 \over 2 \sqrt{r_i}} \right)
\ = \ s_i^2
\, ,
\qquad i=1,2
\, .
$$
Hence, taking a Taylor series expansion around $x=0$,
we obtain from~\eqref{Gdinit} that
\begin{multline*}
{\E[f(W_{\delta}) \, | \, W_0=x] - f(x) \over {\delta}}
\\
\ = \
d \, w_1 s_1^2
\left(d^{-1/2} / s_1 \right)
f'^+(0)
+ \ \frac{1}{2} d \, w_1 s_1^2
\left(d^{-1/2} / s_1 \right)^2
f''^+(0)
\ + \ O(d \, d^{-3/2}) + o(1)
\\
- \ d \, w_2 s_2^2
\ \left(d^{-1/2} / s_2 \right)
f'^-(0)
+ \ \frac{1}{2} d \, w_2 s_2^2
\left(d^{-1/2} / s_2 \right)^2
f''^-(0)
+ \ O(d \, d^{-3/2}) + o(1)
\\
\ = \
d^{1/2} [ w_1 s_1 f'^+(0) - w_2 s_2 f'^-(0) ]
\ + \ \frac{1}{2} [ w_1 f''^+(0) + w_2 f''^-(0) ]
\ + \ O(d^{-1/2}) + o(1)
\, .
\end{multline*}
Now, by the definition of $f\in \D$,
$w_1 s_1 f'^+(0) - w_2 s_2 f'^-(0) = 0$,
and $w_1 f''^+(0) + w_2 f''^-(0) = (w_1+w_2) f''(0) = f''(0)$.
Hence, we obtain finally that
$$
{\E[f(W_{\delta}) \, | \, W_0=x] - f(x) \over {\delta}}
\ = \
\frac{1}{2}f''(0) + O(d^{-1/2}) + o(1)
\, ,
$$
so that
$$
\lim_{d\to\infty} G^{(d)}f(0)
\ = \ \lim_{d\to\infty} \ \lim_{\delta \searrow 0}
{\E[f(W^{(d)}_\delta) \, | \, W^{(d)}_0=x] - f(x) \over \delta}
\ = \ \half f''(0)
\ = \ G^*(0)
\, .
$$
This establishes~\eqref{G0eqn}, and hence
completes the proof of Theorem~\ref{Theorem:skewBM}.

\begin{remark}
{\rm
Because of our transformations converting $\beta^{(d)}(t)$ to $W^{(d)}_t$,
the limiting process $W$ in Theorem~\ref{Theorem:skewBM}
was skew Brownian motion on a continuous interval.
It is also possible to consider diffusions directly associated with a
discrete graph, see e.g.~\cite{Freidlin2001}.
}
\end{remark}


\section{Appendix: Modified Processes and Occupation Times}
\label{sec-minormod}

\def\Whatdt{W_{\tau_d(t)}^{(d)}}
\def\Whatd0{W_0^{(d)}}

Recall that the proof of~\eqref{Gwmaxeqn}
in Section~\ref{sec-1314} above was actually for a minor modification
of the process $W_t^{(d)}$, which speeds up time by a factor of~2 whenever
it is in the state $\wmax$.  We now argue that this minor modification
does not affect the limiting distribution.  Indeed,
since the modification corresponds to adjusting the rate of time,
we can write the modified process as $\widehat{W}_t^{(d)}
\equiv \Whatdt$, where $\tau_d(t)$ is the time
scale including the occasional speedups.
Clearly $\lim_{t \searrow 0} \tau_d(t) = 0$.
Also, it follows from Proposition~\ref{occtime} below
that the fraction of time that the original process spends at $\wmax$
converges to~0 as $d\to\infty$.  This implies that
$\lim_{d\to\infty} (\tau_d(t) / t) = 1$.
Since our process $W_t^{(d)}$ is continuous, this means that
$\lim_{d\to\infty} |f(\Whatdt) - f(W_t^{(d)})| = 0$.
That is, the two processes have the same limiting behaviour
as $d\to\infty$.  So, the diffusion limit is
not affected by making our minor modification as above.


It remains to state and prove Proposition~\ref{occtime}.  We begin with
a result about limiting probabilities for reflecting simple
symmetric random walk.


\begin{proposition}\label{refrw}
Let $\{Y_n\}$ be reflecting simple symmetric random walk
on the state space $\{0,1,2,\ldots,m\}$,
i.e.\ a discrete-time birth-death Markov chain
with transition probabilities
$p_{i,i+1}=p_{i,i-1}=1/2$ for $1 \le i \le m-1$,
and $p_{0,1}=p_{m,m-1}=1$.  Then
for all $m\in\IN$ and all sufficiently large $n\in\IN$,
$\P(Y_n = 0) \, \le \, (2/\sqrt{n}) + (1/m)$.
Hence, $\lim\limits_{n,m\to\infty} \P(Y_n = 0) \, = \, 0$.
\end{proposition}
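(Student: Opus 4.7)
The plan is to establish the quantitative inequality $\P(Y_n=0) \le 2/\sqrt{n} + 1/m$, from which the limit statement follows immediately by letting $n$ and $m$ tend to infinity together. The key idea is to realise the doubly-reflected walk $\{Y_n\}$ as a deterministic function of an unreflected simple symmetric random walk $\{S_n\}$ on $\IZ$, and then analyse $\P(Y_n=0)$ via a discrete Fourier sum on the quotient $\IZ/2m\IZ$.

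First I would introduce the $2m$-periodic ``accordion fold'' $\phi:\IZ\to\{0,\ldots,m\}$ defined by $\phi(k)=k$ for $0\le k\le m$ and $\phi(k)=2m-k$ for $m\le k\le 2m$. A brief case check shows that if $\{S_n\}$ is simple symmetric random walk on $\IZ$ with $S_0=Y_0$, then $\phi(S_n)$ has exactly the transition law of $\{Y_n\}$; in particular the deterministic boundary rule $p_{0,1}=1$ is enforced by $\phi(\pm 1)=1$, and $p_{m,m-1}=1$ by $\phi(m\pm 1)=m-1$. Consequently $\P(Y_n=0)=\P(S_n\in 2m\IZ)$.

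Next I would apply the character formula on $\IZ/2m\IZ$. Using the random-walk characteristic function $\E[e^{i\theta S_n}]=e^{i\theta Y_0}\cos(\theta)^n$ evaluated at the characters $\theta=\pi\ell/m$, this yields
\[
\P(Y_n=0) \ = \ \frac{1}{2m}\sum_{\ell=0}^{2m-1}e^{i\pi\ell Y_0/m}\cos(\pi\ell/m)^n,
\]
so it suffices to bound $\frac{1}{2m}\sum_{\ell=0}^{2m-1}|\cos(\pi\ell/m)|^n$. The two terms $\ell=0$ and $\ell=m$ contribute exactly $1/m$ together. Using the symmetry $|\cos(\pi\ell/m)|=|\cos(\pi(2m-\ell)/m)|$, the remainder equals $\frac{1}{m}\sum_{\ell=1}^{m-1}|\cos(\pi\ell/m)|^n$. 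Splitting at $\ell=m/2$ into two monotone pieces (decreasing for $x=\ell/m$ in $[0,1/2]$ and increasing in $[1/2,1]$), the standard monotone Riemann-sum comparison gives
\[
\frac{1}{m}\sum_{\ell=1}^{m-1}|\cos(\pi\ell/m)|^n \ \le \ \int_0^1|\cos(\pi x)|^n\,dx \ = \ \frac{2}{\pi}\int_0^{\pi/2}\cos(\theta)^n\,d\theta \ \sim \ \sqrt{2/(\pi n)}
\]
by the Wallis asymptotic, which is at most $2/\sqrt{n}$ for all sufficiently large $n$. Combining these pieces yields $\P(Y_n=0)\le 1/m + 2/\sqrt{n}$ uniformly in $m$, and the claimed limit then follows.

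The main obstacle I expect is the Riemann-sum step, where the non-monotonicity of $|\cos(\pi x)|^n$ on $[0,1]$ forces the splitting at its unique zero $x=1/2$; on each piece the monotone comparison is then routine. The arbitrary initial distribution of $Y_0$ is painless, as it contributes only the unit-modulus phase $e^{i\pi\ell Y_0/m}$, which disappears upon passage to absolute values in the Fourier bound.
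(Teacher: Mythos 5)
Your proof is correct, and while it opens with the same key idea as the paper --- realising $\{Y_n\}$ as the image of an unreflected simple symmetric random walk on $\IZ$ under the $2m$-periodic tent map, so that $\P(Y_n=0)=\P(S_n\in 2m\IZ)$ --- the way you then estimate that probability is genuinely different. The paper stays entirely with binomial probabilities: it writes $\P(S_n\in 2m\IZ)=\sum_{j\in\IZ}\P[\Binomial(n,1/2)=\tfrac n2+\tfrac y2+jm]$, bounds the two terms nearest the binomial mode by $1/\sqrt n$ each via Stirling's approximation, and controls the remaining terms by exploiting the monotonicity of $k\mapsto\P[\Binomial(n,1/2)=k]$ away from the mode (each far term is at most the average of the $m$ preceding ones, so the two tails contribute at most $1/2m$ each). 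You instead pass to the dual side, expanding the indicator of $2m\IZ$ over the characters of $\IZ/2m\IZ$ to get $\frac{1}{2m}\sum_{\ell=0}^{2m-1}e^{i\pi\ell Y_0/m}\cos(\pi\ell/m)^n$; the two unimodular characters $\ell=0,m$ give the $1/m$, and the rest is bounded by a Riemann-sum comparison (split at the zero of $|\cos(\pi x)|$ at $x=1/2$ to restore monotonicity) against $\int_0^1|\cos(\pi x)|^n\,dx\sim\sqrt{2/\pi n}$. Both routes yield the same $2/\sqrt n + 1/m$ with an $n$-threshold uniform in $m$, which is what the statement requires. Your Fourier route is the more standard mixing-time argument and handles the initial state particularly cleanly (it only enters as a phase that disappears under absolute values), whereas the paper's argument is more elementary in that it avoids character sums and Wallis asymptotics, needing only Stirling and unimodality of the binomial. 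One small point worth making explicit if you write this up: the verification that the folded process $\phi(S_n)$ is Markov with the stated reflecting transition probabilities (i.e.\ that the conditional law of $\phi(S_{n+1})$ given $S_n$ depends only on $\phi(S_n)$, including at the images of $0$ and $m$) deserves a line, as does the observation that the $\ell=m/2$ term causes no double-counting in the split Riemann sum because $|\cos(\pi/2)|^n=0$.
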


\begin{figure}[ht]
\centerline{\includegraphics[width=14cm]{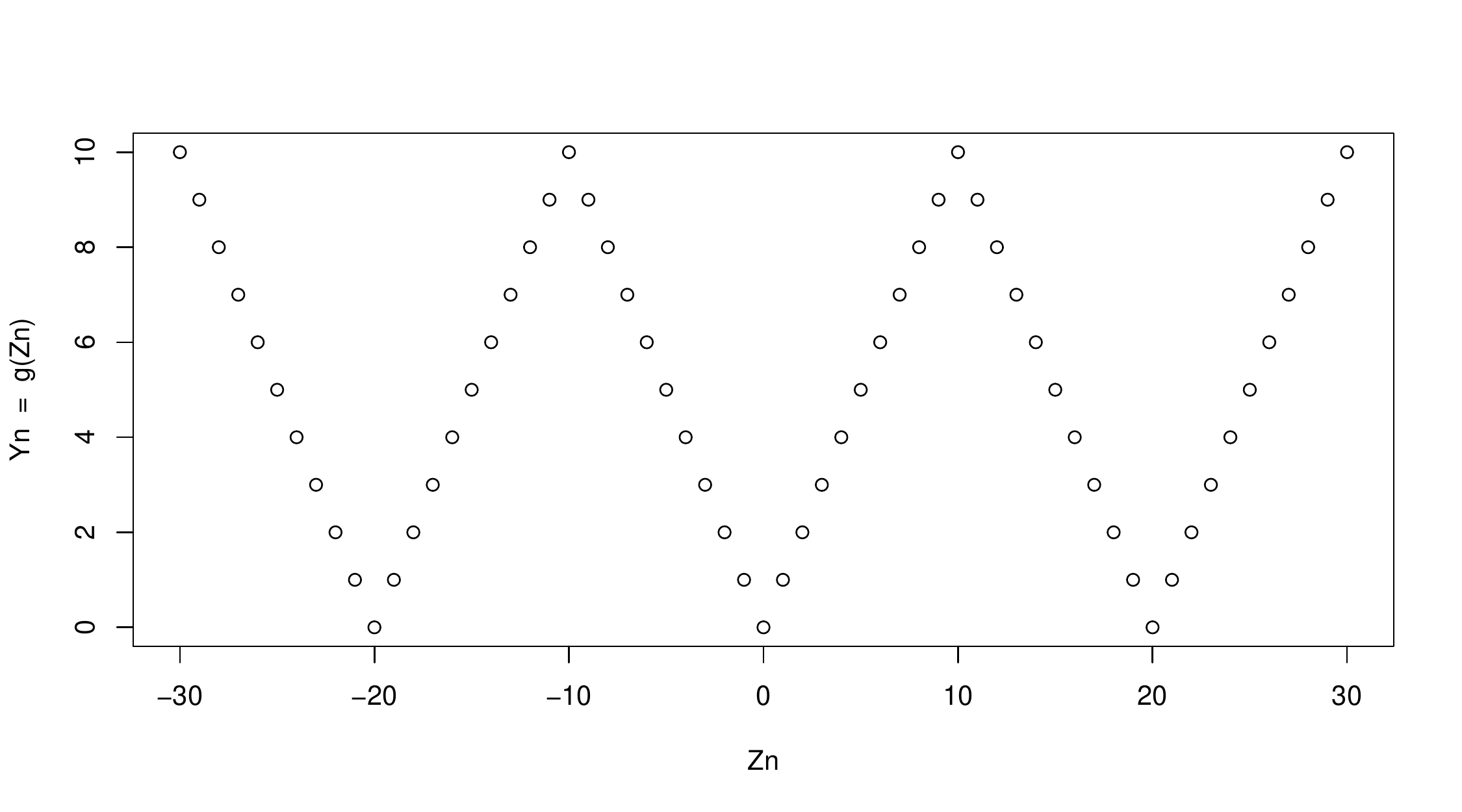}}
\caption{The lifting transformation function ``$g$'' (when $m=10$).}
\label{fig-g-function}
\end{figure}

\begin{proof}
We condition on $Y_0=y$; the general case then follows by taking
expectation with respect to $Y_0$.
We ``lift'' $\{Y_n\}$ to $\IZ$ by writing
$Y_n = g(Z_n)$, where $\{Z_n\}$ is simple symmetric
random walk on {\it all} the integers $\IZ$, and
$g(z) = \min_j |z-2jm|$ (see Figure~\ref{fig-g-function}).
Then
$$
\P_y[Y_n=0]
\ = \ \P_y[g(Z_n)=0]
\ = \ \sum_{j\in\IZ} \P_y[Z_n=2jm]
$$
$$
\ = \ \sum_{j\in\IZ} \P_y\Big[\Binnh = \n2 + \y2 + jm\Big]
\ = \ \sum_{j\in\IZ} h\left(\n2+\y2+jm\right)
\, ,
$$
where $h(k) = \P[\Binnh = k]$.  Now, $h$ is maximised
when $k=n/2$ (or $(n\pm 1)/2$ if $n$ is odd), and decreases monotonically
on either side of that.  Hence, find $j_*\in\IN$ with
$\y2+(j_*-1)m < 0 \le \y2+j_*m$.
It follows from Stirling's Approximation (see e.g.~\cite{maxbinom.tex})
that to first order as $n,k,n-k\to\infty$,
$$
\P[\Binnh = k]
\ \le \ e^{-2n[\half-{k \over n}]^2} \, \sqrt{1 / 2 \pi k [1-(k/n)]}
\, ,
$$
so in particular
$$
h\left(\n2+\y2+j_*m\right)
\ \le \ \sqrt{2/\pi n} + o_n(1)
\ \le \ 1/\sqrt{n}
$$
for all sufficiently large $n$, and similarly for
$h\left(\n2+\y2+(j_*-1)m\right)$.
Then, by monotonicity, we have for $j>j_*$ that
\begin{multline*}
h\left(\n2+\y2+jm\right) \ \le \
{1 \over m} \Big[ h\left(\n2+\y2+(j-1)m+1\right)+h\left(\n2+\y2+(j-1)m+2\right)
\\
					+\ldots+h\left(\n2+\y2+jm\right) \Big]
\, .
\end{multline*}

\noindent
Hence,
$$
\sum_{j > j_*} h\left(\n2+\y2+jm\right)
\ \le \ {1 \over m} \Big[ h\left(\n2+\y2+1\right)+h\left(\n2+\y2+2\right)
			+h\left(\n2+\y2+3\right)+\ldots \Big]
\, .
$$
But $\sum_k h(k) = 1$, so by symmetry
$\sum_{k>n/2} h(k) \le 1/2$, and so
$$
h\left(\n2+\y2+1\right)+h\left(\n2+\y2+2\right)+h\left(\n2+\y2+3\right)
						+\ldots \le 1/2
\, .
$$
Thus,
$$
\sum_{j > j_*} h\left(\n2+\y2+jm\right)
\ \le \ {1 \over 2m}
\, .
$$
Similarly,
$$
\sum_{j < j_*-1} h\left(\n2+\y2+jm\right)
\ \le \ {1 \over 2m}
\, .
$$
Therefore, for all sufficiently large $n$,
$$
\sum_{j\in\IZ} h\left(\n2+\y2+jm\right)
\ \le \
(1/\sqrt{n}) + (1/\sqrt{n}) + {1 \over 2m} + {1 \over 2m}
\ = \ (2/\sqrt{n}) + (1 / m)
\, ,
$$
as claimed.
\end{proof}


\medskip
\begin{remark}
{\rm
Similar arguments show that
$\lim\limits_{n,m\to\infty} \P(Y_n = z) \, = \, 0$ for any
fixed number $z\in\IN$, by replacing ``$Z_n=2jm$'' by ``$Z_n=2jm+z$'',
and ``$\n2+\y2$'' by ``$\n2+\y2-\z2$'', throughout the proof,
though we do not use that fact here.
}
\end{remark}

\medskip
\begin{corollary}\label{refcor}
Let $\{Y_n\}$ be as in Proposition~\ref{refrw}.
Let $N_0=\#\{i : 0 \le i \le n-1, \ Y_i=0\}$ be the occupation time of the
state~0 before time~$n$.  Then as $n,m\to\infty$,
the average occupation time $N_0/n$ converges to 0 in probability.
\end{corollary}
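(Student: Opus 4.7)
The plan is to bound the first moment $\mathbb{E}[N_0/n]$ using Proposition~\ref{refrw}, and then deduce convergence in probability via Markov's inequality (which is valid here because $0 \le N_0/n \le 1$). Since $N_0 = \sum_{i=0}^{n-1} \mathbf{1}[Y_i=0]$, linearity of expectation and the Markov property (no conditioning on $Y_0$ is needed, as the bound in Proposition~\ref{refrw} is uniform over initial distributions) gives
\[
\mathbb{E}[N_0/n] \;=\; \frac{1}{n}\sum_{i=0}^{n-1} \P(Y_i=0).
\]

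Next, I would appeal to Proposition~\ref{refrw}, which provides some threshold $n_0$ (independent of $m$, as the proposition statement is uniform in $m$) such that for every $i \ge n_0$,
\[
\P(Y_i=0) \;\le\; \frac{2}{\sqrt{i}} + \frac{1}{m}.
\]
Splitting the sum at $i=n_0$ and using the trivial bound $\P(Y_i=0)\le 1$ for $i<n_0$, together with the standard estimate $\sum_{i=1}^{n-1} i^{-1/2} \le 2\sqrt{n}$, yields
\[
\frac{1}{n}\sum_{i=0}^{n-1} \P(Y_i=0) \;\le\; \frac{n_0}{n} + \frac{4}{\sqrt{n}} + \frac{1}{m}.
\]
Letting $n,m\to\infty$ forces this upper bound to $0$, so $\mathbb{E}[N_0/n] \to 0$.

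Finally, for any $\epsilon>0$, Markov's inequality gives $\P(N_0/n > \epsilon) \le \mathbb{E}[N_0/n]/\epsilon$, which tends to $0$ as $n,m\to\infty$, proving convergence in probability. The main issue to verify carefully is that the ``sufficiently large $n$'' threshold supplied by Proposition~\ref{refrw} is indeed uniform in $m$ and in the initial state; a quick inspection of the proof (which uses Stirling's approximation applied to central binomial terms, plus a telescoping/monotonicity bound whose constants do not depend on $m$) confirms this, so the argument goes through as stated.
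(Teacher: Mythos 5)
Your proof is correct and follows essentially the same route as the paper: bound $\E[N_0/n]$ via linearity and Proposition~\ref{refrw}, then apply Markov's inequality. The only cosmetic difference is that the paper passes from $\P(Y_n=0)\to 0$ to the averaged limit by citing Ces\`aro summation, whereas you sum the explicit bound $2/\sqrt{i}+1/m$ directly, which yields the same conclusion (and your care about the threshold being uniform in $m$ and the initial state is exactly the point that makes either version of the argument go through).
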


\begin{proof}
Let $I_i = \one_{Y_i=0}$ be the indicator function of the event $Y_i=0$.
Then by Proposition~\ref{refrw},
$\lim_{n,m\to\infty} \E[I_n] = \lim_{n,m\to\infty} \P[Y_n=0] = 0$.
Hence, using the theory of Ces\`aro sums,
\begin{multline*}
\lim_{n,m\to\infty} \E[N_0/n]
\ = \ \lim_{n,m\to\infty} \E\Big[ \sum_{i=0}^{n-1} I_i \Big] / n
\ = \ \lim_{n,m\to\infty} {1 \over n} \, \sum_{i=0}^{n-1} \E[I_i]
\ = \ \lim_{n,m\to\infty} \E[I_{n}]
\ = \ 0
\, .
\end{multline*}
Hence, by Markov's inequality, since $N_0/n \ge 0$, for any $\epsilon>0$
we have
$$
\lim_{n,m\to\infty} \P[(N_0/n) > \epsilon]
\ \le \ \lim_{n,m\to\infty} \E[N_0/n] / \epsilon
\ = \ 0
\, ,
$$
so that $N_0/n \to 0$ in probability, as claimed.
\end{proof}


\begin{proposition}\label{occtime}
Let $\{X_n\}$ be a discrete-time birth-death Markov chain
on the state space $\{0,1,2,\ldots,m\}$,
with transition probabilities satisfying that
$p_{i,j}=0$ whenever $|j-i| \ge 2$,
$p_{i,i+1}=p_{i,i-1}$ for all $1 \le i \le m-1$,
and $p_{i,i} \le 1-a$ for some fixed constant $a>0$.
Let $N_0=\#\{i : 0 \le i \le n-1, \ X_i=0\}$.
Then as $n,m\to\infty$, $N_0/n$ converges to 0 in probability.
\end{proposition}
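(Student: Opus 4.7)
The plan is to reduce Proposition~\ref{occtime} to Corollary~\ref{refcor} by passing to the embedded chain of actual (non-self-loop) transitions. The key observation is that, thanks to the symmetry $p_{i,i+1} = p_{i,i-1}$ together with the nearest-neighbour structure $p_{i,j}=0$ for $|j-i| \ge 2$, this embedded chain is \emph{exactly} the reflecting SSRW of Proposition~\ref{refrw}, and the self-loops merely stretch out time by a conditionally bounded factor of at most $1/a$.

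More precisely, let $T_0 = 0 < T_1 < T_2 < \ldots$ be the successive times of non-self-loop transitions of $X$, and set $Y_k = X_{T_k}$ and $L_k = T_{k+1} - T_k$. For any interior $i$, conditional on leaving $i$ the chain goes to $i+1$ or $i-1$ each with probability $p_{i,i\pm 1}/(p_{i,i+1}+p_{i,i-1}) = 1/2$; at the boundaries $0$ and $m$ the only allowed non-self-loop moves are to $1$ and $m-1$ respectively. Hence $\{Y_k\}$ is exactly the reflecting SSRW on $\{0,\ldots,m\}$. Moreover, conditional on the whole sequence $(Y_k)$, the $L_k$ are independent, with $L_k$ geometric of parameter $1 - p_{Y_k,Y_k} \ge a$, so $\E[L_k \mid Y_k] \le 1/a$. (This decoupling follows by decomposing each one-step transition into an independent ``move-vs-stay'' Bernoulli flip, and, when moving, an independent ``$+1$-vs-$-1$'' Bernoulli flip.)

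Since $T_k \ge k$, every visit of the embedded chain that begins by time $n-1$ has index $k \le n-1$, and truncating holding times that overshoot $n-1$ only decreases $N_0$, so
\begin{equation*}
N_0 \;\le\; \sum_{k=0}^{n-1} L_k \, \one_{Y_k = 0}.
\end{equation*}
Conditioning on $(Y_k)$ inside the expectation gives
\begin{equation*}
\E[N_0] \;\le\; \sum_{k=0}^{n-1}\E\bigl[\one_{Y_k = 0}\,\E[L_k\mid Y_k]\bigr] \;\le\; \frac{1}{a}\sum_{k=0}^{n-1}\P[Y_k=0] \;=\; \frac{1}{a}\,\E[V_0^{(n)}],
\end{equation*}
where $V_0^{(n)} := \#\{0 \le k \le n-1 : Y_k=0\}$. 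The proof of Corollary~\ref{refcor} actually shows (via the uniform bound $\P[Y_i=0]\le 2/\sqrt{i}+1/m$ of Proposition~\ref{refrw} combined with Ces\`aro averaging) that $\E[V_0^{(n)}/n] \to 0$ as $n,m\to\infty$. Consequently $\E[N_0/n]\to 0$, and Markov's inequality delivers the desired in-probability convergence.

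The main obstacle is Step~1: one must verify that no asymmetry is introduced at the boundaries despite the unspecified self-loop probabilities $p_{0,0}$ and $p_{m,m}$, and that the holding times really are conditionally independent of the future embedded trajectory given the current state. Both facts rest on the standard ``split each transition into a move-vs-stay flip followed by an independent direction flip'' construction outlined above, after which the remainder of the argument is routine bookkeeping.
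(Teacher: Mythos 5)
Your proof is correct, and its core is the same as the paper's: both arguments pass to the embedded (jump) chain, observe that the symmetry $p_{i,i+1}=p_{i,i-1}$ and the nearest-neighbour restriction make it exactly the reflecting simple symmetric random walk of Proposition~\ref{refrw}, and control the self-loop holding times by a Geometric$(a)$ bound before invoking Corollary~\ref{refcor}. Where you differ is in the final accounting. The paper works pathwise with the random index $K(n)$ (the number of jumps completed by time $n$), needs $K(n)\to\infty$, the bound $n \ge K(n)-1$, and a conditional concentration statement $\P[N_0 > 2C_0/a \mid C_0]\to 0$ before combining everything. You instead use $T_k \ge k$ to bound $N_0$ by the deterministic-index sum $\sum_{k=0}^{n-1} L_k \one_{Y_k=0}$, take expectations conditionally on the embedded trajectory to get $\E[N_0/n] \le \frac{1}{a}\cdot\frac{1}{n}\sum_{k=0}^{n-1}\P[Y_k=0] \to 0$, and finish with Markov's inequality. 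This is a genuine streamlining: it avoids applying Corollary~\ref{refcor} at a random time and removes the need for the concentration step, at the cost only of noting (as you do) that the proof of Corollary~\ref{refcor} gives convergence of the \emph{expected} occupation fraction, which is exactly what the Ces\`aro argument there establishes. Your attention to the boundary behaviour (the forced moves $0\to 1$ and $m\to m-1$ in the embedded chain) and to the conditional independence of the holding times given the embedded trajectory addresses the only places where the reduction could go wrong.
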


\begin{proof}
Let $\{J_k\}$ be the {\it jump chain} of $\{X_n\}$,
i.e.\ the Markov chain which copies $\{X_n\}$ except omitting immediate
repetitions of the same state, and let
$\{M_k\}$ count the number of repetitions.
[For example, if the original chain $\{X_n\}$ began
$
\{X_n\} = ( a, b, b, b, a, a, c, c, c, c, d, d, a, \ldots )
\, ,
$
then the jump chain $\{J_k\}$ would begin
$
\{J_k\} = ( a, b, a, c, d, a, \ldots )
\, ,
$
and the corresponding multiplicity list $\{M_k\}$ would begin
$
\{M_k\} = ( 1, 3, 2, 4, 2, \ldots )
\, .
$]
Then the assumptions imply that $\{J_k\}$ has the transition
probabilities of reflecting simple symmetric random walk, as
in Proposition~\ref{refrw} and Corollary~\ref{refcor} above.

Now, let $K(n)$ be the smallest integer with
$M_1+\ldots+M_{K(n)} \ge n$.
Given $J_k$, the random variable $M_k$ has the Geometric$(1-p_{J_k J_k})$
distribution, so it is stochastically bounded above by the Geometric$(a)$
distribution, from which it follows that $\lim_{n\to\infty} K(n)
= \infty$ w.p.~1.
Let $C_s=\#\{i : 0 \le i \le K(n), \ J_i=s\}$.
Then Corollary~\ref{refcor} implies that $\lim_{n,m\to\infty} (C_0/K(n))
= 0$.
On the other hand,
$N_0$ is $\le$ a sum of $C_0$ independent Geometric$(1-p_{00})$
random variables, so $\E[N_0 \, | \, C_0] = C_0/(1-p_{00}) \le C_0/a$,
and $\P[N_0 > 2C_0/a \, | \, C_0] \to 0$ as $n\to\infty$.
Also, $M_1+\ldots+M_{K(n)-1} \le n$, and each $M_i \ge 1$, so
$n \ge K(n)-1$.  We therefore conclude that
$$
\lim_{n,m\to\infty} {N_0 \over n}
\ \le \ \lim_{n,m\to\infty} {2C_0/a \over K(n)-1}
\ = \ ({2 / a}) \, \lim_{n,m\to\infty} {C_0 \over K(n)}
\ = \ 0
\, ,
$$
as claimed.
\end{proof}



\begin{remark}\rm
It might be possible to instead obtain the conclusion of
Proposition~\ref{occtime} via the Ergodic Theorem or
the Markov chain Law of Large Numbers, which states that for fixed $m$
the average occupation time $N_0/n$ will converge to the stationary
measure of the process at state~0.  However, this would require
conditions and bounds on the sequence of stationary measures as
$m\to\infty$, so it would not be trivial
(nor provide the explicit bound of Proposition~\ref{refrw}),
and we instead use the more
direct and quantitative method described herein.
\gcheck
\end{remark}

\bigskip\noindent\bf Acknowledgements. \rm
We thank Alex Mijatovic and Neal Madras for very helpful comments related
to Section~\ref{sec-minormod} herein, and thank the editor and referees
for very insightful suggestions which greatly improved the manuscript.
This research was partially supported by EPSRC grants EP/R018561/1
and EP/R034710/1 to GOR, and NSERC grant RGPIN-2019-04142  to JSR.

\medskip

\bibliographystyle{plain}
\bibliography{tawnbib}

\end{document}